\documentclass[11pt,a4paper]{article}

\usepackage{amsmath, amssymb, amsthm}
\usepackage[numbers,sort,compress]{natbib}
\usepackage{geometry}
\usepackage{authblk}

\newtheorem{theorem}{Theorem}[section]
\newtheorem{lemma}[theorem]{Lemma}

\newtheorem{definition}[theorem]{Definition}
\newtheorem{remark}[theorem]{Remark}

\newcommand{\rank}{\operatorname{rank}}
\newcommand{\adj}{\operatorname{adj}}
\newcommand{\diag}{\operatorname{diag}}
\newcommand{\col}{\operatorname{col}}

\title{Constructions of almost controllable graphs determined by their generalized spectra}
\author
{
	Wei Wang$^{\rm a}$\thanks{Corresponding author\\
		E-mail addresses:  wangwei.math@gmail.com (W. Wang); 1975822059@qq.com (M. Shi);  fenjinliu@163.com (F. Liu).}\quad
	Manjin Shi$^{\rm a}$\quad
	Fenjin Liu$^{\rm b}$
	\\
	{\footnotesize$^{\rm a}$School of Mathematics, Physics and Finance, Anhui Polytechnic University, Wuhu 241000, P. R. China}\\
	{\footnotesize$^{\rm b}$School of Science, Chang'an University, Xi'an 710064, P.R. China}
}
\date{}

\begin{document}
	
	\maketitle
	
	\begin{abstract}
		Identifying and constructing graphs that are determined by their generalized spectrum (DGS) is a significant and challenging problem in spectral graph theory. Recently, a simple criterion for almost controllable graphs to be DGS was proposed by Lin et al.~(2026), utilizing the modified walk matrix. In this paper, we investigate the evolution of the modified walk matrix under disjoint union and join operations with a singleton vertex. We establish an exact algebraic identity for the determinant of the modified walk matrix of the resulting graph. Based on this identity and the DGS-criterion of Lin et al., we construct infinite families of almost controllable graphs that are DGS, extending the previous construction of Liu et al.~(2019), which was restricted to controllable graphs.
			\end{abstract}
			
		\noindent\textbf{Keywords:} generalized spectrum; determined by generalized spectrum; modified walk matrix; almost controllable graph; main polynomial.\\
		
		\noindent\textbf{Mathematics Subject Classification:} 05C50
	\section{Introduction}
	
	Let $G$ be a simple undirected graph on $n$ vertices with adjacency matrix $A$. The \emph{spectrum} of $G$ is the multiset of eigenvalues of $A$. The \emph{generalized spectrum} of $G$ is defined as the spectrum of $G$ together with the spectrum of its complement $G^c$. A graph $G$ is said to be \emph{determined by its generalized spectrum} (DGS) if any graph $H$ sharing the same generalized spectrum is isomorphic to $G$. The study of generalized spectral characterizations of graphs was initiated by Wang and Xu \cite{WangXu2006} as a natural extension of the classical spectral characterization problem, which aims to resolve the long-standing question of which graphs are uniquely determined by their spectra. For an overview on the development of spectral characterizations of graphs, we refer the readers to the survey papers \cite{Dam2003,Dam2009,Sason2025}.
	
	 A fundamental tool in studying the problem of generalized spectral characterizations is the \emph{walk matrix}, defined as $W(G) = [e, Ae, \dots, A^{n-1}e]$, where $e$ is the all-ones vector. A graph $G$ is called \textit{controllable} \cite{Godsil2012} if $\rank W(G)=n$; and \emph{almost controllable} \cite{Wang2021} if $\rank W(G)=n-1$. It is known that the number $2^{-\lfloor n/2\rfloor}\det W(G)$ is always an integer \cite{Wang2013}. Wang \cite{Wang2017} established the following simple DGS-criterion for controllable graphs.
	
	\begin{theorem}[\cite{Wang2017}]\label{wt}
		Let $G$ be a controllable graph with $n$ vertices. If $2^{-\lfloor n/2\rfloor}\det W(G)$ is odd and square-free, then $G$ is DGS.
	\end{theorem}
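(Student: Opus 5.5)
The plan is to follow the standard rational orthogonal matrix approach of Wang and Xu. First I will use the known characterization: for a controllable graph $G$, a graph $H$ has the same generalized spectrum as $G$ if and only if there is a rational orthogonal matrix $Q$ with $Q^TA(G)Q=A(H)$ and $Qe=e$. Moreover, $Q$ is unique and is given by $Q=W(H)W(G)^{-1}$. Since $G$ is controllable, $Q$ is well defined.

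Next, let $\ell(Q)$ denote the \emph{level} of $Q$, the smallest positive integer $\ell$ with $\ell Q$ integral. The set of such $Q$ that are regular (that is, $Qe=e$) and rational orthogonal is denoted $\Gamma(G)$. The graph $G$ is DGS if and only if every $Q\in\Gamma(G)$ is a permutation matrix, which is equivalent to $\ell(Q)=1$. From $W(H)=QW(G)$ and $Q^T W(H)=W(G)$ one gets $\ell(Q)\mid d_n$, where $d_n$ is the last invariant factor of the Smith normal form of $W(G)$. In particular $\ell(Q)\mid\det W(G)$.

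It therefore suffices to show that no prime $p$ divides $\ell(Q)$, and I split into two cases.

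\textbf{The case $p=2$.} The hypothesis that $2^{-\lfloor n/2\rfloor}\det W(G)$ is odd forces the Smith normal form of $W(G)$ to be
\[
\diag(1,\dots,1,2,\dots,2,2b),
\]
with $\lceil n/2\rceil$ ones, $b$ odd, and $2^{\lfloor n/2\rfloor}$ exactly dividing $\det W(G)$. This uses the fact that $e^TA^ke\equiv e^T A e\cdot(\text{stuff})$, so that $W^TW \pmod 2$ has rank at most $\lceil n/2\rceil$. I then rule out $2\mid\ell(Q)$ by an argument mod $4$. Write $\hat Q=\ell Q$ and take a vector $u$ with $\hat Q^T$-column in the $\mathbb{F}_2$-kernel of $W^T$. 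From $u^TW\equiv 0\pmod 2$ together with $u^Tu\equiv0\pmod 4$ (by orthogonality) and $u^TAu\equiv 0\pmod 4$, one derives that $u$ must lie in the $2$-part forced by a larger $2$-power in $\det W$, which contradicts oddness. This is the step I expect to be hardest: it needs the specific lemma that the kernel of $W^T$ over $\mathbb{F}_2$ consists of vectors $z$ with $z^Tz\equiv 0$ and $z^TAz\equiv0$ mod $4$, and that such $z$ lift to solutions of $W^Tz\equiv0\pmod 4$ only if $4\mid 2b$.

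\textbf{The case of an odd prime $p$.} Here square-freeness gives $p\,\|\,\det W$, so $\rank_p W(G)=n-1$ and the $\mathbb{F}_p$-kernel of $W^T$ is spanned by a single vector $z$. If $p\mid\ell(Q)$, some column $u$ of $\ell Q$ satisfies $u^TW\equiv 0\pmod p$, so $u\equiv cz$ with $c\neq 0$. Orthogonality gives $u^Tu\equiv0\pmod {p^2}$. Using $A$-invariance, namely that $W^T z\equiv 0$ implies $z$ is an eigenvector of $A$ mod $p$ (say $Az\equiv\lambda z$), one shows $z^Tz\equiv0\pmod p$. One can then lift: find $y$ with $W^Ty\equiv0\pmod{p^2}$ and $y\equiv z\pmod p$. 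This forces $p^2\mid d_n$, contradicting square-freeness.

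Combining the two cases, $\ell(Q)=1$ for every $Q\in\Gamma(G)$, so $Q$ is a permutation matrix and $H\cong G$.
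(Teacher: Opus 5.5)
The paper does not prove this statement; it quotes it from \cite{Wang2017}. Your framework is the standard one: the regular rational orthogonal $Q$, the divisibility $\ell(Q)\mid d_n$, and ruling out each prime divisor of $\ell(Q)$. However, both prime cases have genuine gaps.

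\textbf{The case $p=2$.} This is the heart of \cite{Wang2017}, and your proposal gives no argument for it. The auxiliary lemma you want to use is false. For even $n$, the vector $e$ lies in $\ker_{\mathbb{F}_2}W^\top$, because $e^\top A^k e$ is even for $k\ge 1$. Yet $e^\top e=n\equiv 2\pmod 4$ when $n\equiv 2\pmod 4$, and $z^\top z \bmod 4$ depends only on $z \bmod 2$.

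More fundamentally, the three single-column congruences you list cannot suffice. Under the hypothesis, $\ker_{\mathbb{F}_2}W^\top$ has dimension $\lfloor n/2\rfloor$. The maps $z\mapsto \tfrac12 z^\top z$ and $z\mapsto \tfrac12 z^\top A z \pmod 2$ are two quadratic forms on this space. So for $n\ge 10$, Chevalley--Warning always gives $z\not\equiv 0\pmod 2$ with $W^\top z\equiv 0\pmod 2$ and $z^\top z\equiv z^\top Az\equiv 0\pmod 4$. Suppose these data forced a lift $y\equiv z\pmod 2$ with $W^\top y\equiv 0\pmod 4$. Then $4\mid d_n$ for every graph on $n\ge 10$ vertices with $2^{-\lfloor n/2\rfloor}\det W$ odd, so no such graph would exist, whereas Theorem \ref{lc} produces infinite families of them. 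Ruling out $2\mid \ell(Q)$ therefore needs substantially more of the structure of $\ell Q$, and that missing idea is the whole difficulty.

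\textbf{Odd $p$.} The lift you claim does not follow from the facts you invoke, namely $W^\top z\equiv 0$, $Az\equiv \lambda z$ and $z^\top z\equiv 0 \pmod p$. Your argument uses nothing graph-specific at this point, so a symmetric integral matrix serves as a counterexample. Take $A=\begin{pmatrix}3&0&1\\0&5&1\\1&1&1\end{pmatrix}$, $p=7$ and $z=(1,2,-3)^\top$. Then $W^\top z=(0,7,42)^\top$, $Az=(0,7,0)^\top$ and $z^\top z=14$, yet $\det W=14$, so no lift exists.

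The missing ingredient is the second quadratic relation. From $(\ell Q)^\top A(\ell Q)=\ell^2A(H)$ you get $u^\top Au=\ell^2A(H)_{jj}=0$, and hence $u^\top(A-\lambda I)u\equiv 0\pmod{p^2}$. With this the step goes through:
\begin{itemize}
\item Since $u^\top u\equiv 0$ and $\rank_p W=n-1$, $u$ lies in $u^\perp=\col_{\mathbb{F}_p}W$.
\item This is a cyclic $A$-space. The annihilator $\phi$ of $e$ there has degree $n-1$, and its coefficient vector $w$ spans $\ker_{\mathbb{F}_p}W$.
\item Writing $\phi(x)=(x-\lambda)\psi(x)$, one gets $u\equiv c\,\psi(A)e$ for some $c\not\equiv 0$.
\item A lift of $u$ exists if and only if $w^\top W^\top u=(\phi(A)e)^\top u\equiv 0\pmod{p^2}$.
\item This quantity is congruent to $c^{-1}u^\top(A-\lambda I)u\equiv 0 \pmod{p^2}$.
\end{itemize}
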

	
	Using this criterion, Liu et al. \cite{Liu2019} constructed infinite families of controllable DGS graphs by alternately applying disjoint union ($\cup$) and join ($\vee$) operations with a single vertex. Precisely, they proved the following theorem.
	
	\begin{theorem}[\cite{Liu2019}]\label{lc}
		Let $G_0$ be an $n_0$-vertex controllable graph such that $2^{-\lfloor n_0/2\rfloor} \det W(G_0)$ is odd and square-free. Let $G_1,G_2,\ldots$ be a sequence of graphs defined by 
		\begin{equation*}
			G_i=\begin{cases}
				G_{i-1}\cup K_1,&\text{if $i$ is odd,}\\
				G_{i-1}\vee K_1,&\text{if $i$ is even.}
			\end{cases}
		\end{equation*}
		If $\{|\det A(G_0)|,|\det A(G_1^c)|\}=\{1,2\}$, then each graph $G_i$ satisfies the condition of Theorem \ref{wt} and hence is DGS.
	\end{theorem}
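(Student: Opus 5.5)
The plan is to reduce everything to one determinant identity for adding an isolated vertex, and to handle joins by complementation. Recall that $G\vee K_1=(G^c\cup K_1)^c$ and that $|\det W(H^c)|=|\det W(H)|$ for every graph $H$. The second fact is the standard one: since $A(H^c)=J-I-A(H)$ and $Je\in\operatorname{span}\{e\}$, the column space of $W(H^c)$ is obtained from that of $W(H)$ by a unimodular triangular change of basis. Controllability is also preserved under complementation.

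\textbf{Step 1 (union identity).} Let $G$ have $n$ vertices and adjacency matrix $A$. I will show $|\det W(G\cup K_1)|=|\det A|\cdot|\det W(G)|$.

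Order the new isolated vertex last. The $k$-th column of $W(G\cup K_1)$ is $(A^ke,\delta_{k0})^T$, so the last row is $(1,0,\dots,0)$. Expanding along that row gives
\[
\det W(G\cup K_1)=\pm\det[Ae,A^2e,\dots,A^ne]=\pm\det A\cdot\det W(G).
\]
Combining this with the complement invariance yields
\[
|\det W(G\vee K_1)|=|\det A(G^c)|\cdot|\det W(G)|.
\]

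\textbf{Step 2 (power of $2$).} Passing from $n$ to $n+1$ vertices, $\lfloor n/2\rfloor$ is unchanged if $n$ is even and increases by one if $n$ is odd.

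The needed parity fact is that if $n$ is odd, then $\det A(H)$ is even for any $n$-vertex graph $H$. Indeed, mod $2$, $A$ is a symmetric matrix with zero diagonal, hence alternating, and an alternating matrix of odd order is singular.

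Consequently, whenever the relevant determinant ($\det A(G_{i-1})$ for a union step, or $\det A(G_{i-1}^c)$ for a join step) has absolute value $1$ when $n_{i-1}$ is even and $2$ when $n_{i-1}$ is odd, the identity of Step 1 gives
\[
2^{-\lfloor n_i/2\rfloor}|\det W(G_i)|=2^{-\lfloor n_{i-1}/2\rfloor}|\det W(G_{i-1})|.
\]
So the odd, square-free, nonzero value propagates unchanged. Nonzero $\det W$ means $G_i$ is controllable, and then Theorem \ref{wt} applies.

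\textbf{Step 3 (tracking the determinants).} This is the main point: I must show the relevant adjacency determinants stay in the right place.

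I use the pendant-vertex rule. If $u$ is a pendant vertex with unique neighbour $w$, then $\det A(H)=-\det A(H-u-w)$.

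For the determinants $\det A(G_{2k})$, which drive the union steps: $G_{2k}=(G_{2k-2}\cup K_1)\vee K_1$. The isolated vertex added at step $2k-1$ becomes pendant, attached to the vertex added at step $2k$. Hence $|\det A(G_{2k})|=|\det A(G_{2k-2})|=\dots=|\det A(G_0)|$.

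For the complements, which drive the join steps: $G_{2k+1}^c=G_{2k}^c\vee K_1=(G_{2k-1}^c\cup K_1)\vee K_1$. The same pendant argument gives $|\det A(G_{2k+1}^c)|=|\det A(G_1^c)|$.

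Finally, I check the parities match. The orders satisfy $n_{2k}\equiv n_0$ and $n_{2k+1}\equiv n_0+1 \pmod 2$. By the parity fact, the hypothesis $\{|\det A(G_0)|,|\det A(G_1^c)|\}=\{1,2\}$ forces the value $2$ to sit on whichever of $G_0$ and $G_1^c$ has odd order. This is exactly the condition required in Step 2.

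An induction on $i$ then completes the proof. I expect the only delicate point to be this parity bookkeeping; Steps 1 and 3 are short determinant expansions.
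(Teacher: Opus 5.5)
Your proof is correct and takes essentially the same approach as the paper. The paper only cites this result, but its proof of the almost-controllable analogue (Theorem~\ref{main}) uses the same ingredients: the union identity $\det W(G\cup K_1)=\pm\det A(G)\det W(G)$, complement invariance to handle joins, the pendant-vertex expansion $\det A((G\cup K_1)\vee K_1)=-\det A(G)$ (Lemma~\ref{mgh}), and the mod-$2$ parity fact; your direct tracking of $|\det A(G_{2k})|$ and $|\det A(G_{2k+1}^c)|$ is simply an unpackaged form of the paper's alternating properties $\mathcal{P}_3$ and $\mathcal{P}_4$.
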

	
	\begin{remark}\normalfont
		Note that any adjacency matrix $A$ of odd order is singular over the binary field $\mathbb{F}_2$ as it is an alternating matrix. This implies that a graph with a unimodular adjacency matrix must have an even order. Thus, the condition $\{|\det A(G_0)|,|\det A(G_1^c)|\}=\{1,2\}$ can be equivalently restated as 
		\begin{equation*}
			(|\det A(G_0)|,|\det A(G_1^c)|)=\begin{cases}
				(1,2), & \text{if $n_0$ is even,} \\
				(2,1), & \text{if $n_0$ is odd.}
			\end{cases}			
		\end{equation*} 
	\end{remark}
	
	Another structural method for constructing DGS families is the rooted product construction. Mao, Liu, and Wang \cite{Mao2015} proved that the rooted product graph $G \circ P_2$ (obtained by attaching a pendant edge at every vertex of $G$) is DGS whenever $\det W(G) = \pm 2^{n/2}$ and $\det A(G)=\pm 1$. This construction was subsequently extended to $G\circ P_m$ (for any $m\ge 2$) and $G\circ H^{(v)}$ for a general graph $H$ rooted at an arbitrary vertex $v$; see \cite{Mao2022,Wang2026,Yan2026,Wang2024}.
	
	We note that all existing constructions of DGS families are only applicable to \emph{controllable} graphs. The main aim of this paper is to extend the method of Liu et al.~\cite{Liu2019} to almost controllable graphs, using the DGS criterion due to Lin et al.~\cite{Lin2026}. To state our main results, we first recall this criterion along with several necessary definitions. For convenience, we let $\mathcal{H}_n$ denote the family of almost controllable graphs of order $n$. For a graph $G\in\mathcal{H}_n$, the \emph{modified walk matrix} $\widetilde{W}(G)$ introduced in \cite{Wang2021}, is defined as 
	\begin{equation*}
		\widetilde{W}(G)=[e,Ae,\ldots,A^{n-2}e,2^{1-\lfloor\frac{n}{2}\rfloor} \xi(G)],
	\end{equation*} 
	where 
	\begin{equation*}
		\xi(G)=(W_{1n},W_{2n},\ldots,W_{nn})^\top,
	\end{equation*} 
	and $W_{in}$ is the algebraic cofactor of the $(i,n)$-entry for the ordinary walk matrix $W(G)$. A key property of the modified walk matrix is that it is an integer matrix and always has a nonzero determinant. The following integer invariant associated with a graph $G\in \mathcal{H}_n$ is crucial.
	\begin{definition}[\cite{Qiu2022}] Let $G\in \mathcal{H}_n$. Define
		$
		\ell_0=\ell_0(G)=\frac{\eta^\top \eta}{2}
		$, where
		\begin{equation*}
			\eta= \frac{\xi(G)}{\gcd(W_{1n},\ldots,W_{nn})}.
		\end{equation*}
	\end{definition}
	
	Note that since $\rank W(G) = n-1$, its left null space $\mathcal{N}(W(G)^\top)$ is one-dimensional. By the properties of algebraic cofactors, $\xi(G)$ spans this null space. Consequently, dividing $\xi(G)$ by the greatest common divisor of its entries ensures that $\eta$ is precisely the coprime integer vector generating $\mathcal{N}(W(G)^\top)$.
	
	 It was proved in \cite{Qiu2022} that for an almost controllable graph $G$, the number $\ell_0(G)$ is a positive integer, moreover $\ell_0(G)=1$ if and only if $G$ has a nontrivial automorphism. For a prime $p$, we use $\mathbb{F}_p$ to denote the finite field of order $p$, and $\bar{\mathbb{F}}_p$ to denote the algebraic closure of the finite field $\mathbb{F}_p$. Let $J_n$ (or $J$) denote  the all-ones matrix of order $n$.  For a graph $G$ with adjacency matrix $A$, the \emph{invariant polynomial} \cite{Wang2023} of $G$, with respect to a prime number $p$, is defined as 
	\begin{equation*}
		\Phi_p(G;x)=\gcd(\chi(A;x),\chi(A+J;x))\in \mathbb{F}_p[x].
	\end{equation*}
	We remark that $\Phi_p(G;x)$ is invariant under generalized cospectrality, that is, if $G$ and $H$ are generalized cospectral, then $\Phi_p(G;x)=\Phi_p(H;x)$.
	
	 For every $n\times n$ integer matrix $M$ with full rank, there exist two unimodular matrices $U$ and $V$ such that $M=USV$, where $S=\diag(d_1,\ldots,d_n)$ with $d_i\mid d_{i+1}$ for $1\le i\le n-1$; the diagonal matrix $S$ is known as the \emph{Smith normal form} (SNF) of $M$, and $d_i$ is called the $i$-th \emph{invariant factor} of $M$. For two nonzero integers $a$ and $b$, we use $a\mid\mid b$ to denote that $a$ precisely divides $b$, i.e., $a\mid b$ but $a^2\nmid b$. Now we can state the DGS criterion for almost controllable graphs due to Lin et al.~\cite{Lin2026}, which simplifies and improves the original criterion obtained by Wang et al. \cite{Wang2021} that only works for graphs $G\in \mathcal{H}_n$ with $\ell_0(G) =1$.
	
	\begin{theorem}[\cite{Lin2026}]\label{lnc}
		Let $G\in \mathcal{H}_n$ and $\widetilde{d}_n$ be the last invariant factor of $\widetilde{W}(G)$. Suppose the following conditions hold:\\
		\noindent\textup{(i)} $\widetilde{d}_n$ is square-free;\\
		\noindent\textup{(ii)} $\ell_0$ is either $1$ or an odd prime with $\ell_0\mid\mid \det \widetilde{W}$;\\
		\noindent \textup{(iii)} $\Phi_p(G;x)$ has exactly two distinct roots over $\bar{\mathbb{F}}_p$ for any odd prime factor $p$ of $\widetilde{d}_n$ with $p\neq \ell_0$.
		
		Then $G$ is DGS.
	\end{theorem}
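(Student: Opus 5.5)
The plan is to follow the standard strategy for generalized spectral characterization via rational orthogonal matrices, adapted to the almost controllable setting. Suppose $H$ is generalized cospectral with $G$. Since $G\in\mathcal{H}_n$, $H\in\mathcal{H}_n$ as well, because the rank of the walk matrix is determined by the generalized spectrum. By the classical argument of Wang and Xu, there is a regular rational orthogonal matrix $Q$ (so $Q^\top Q=I$ and $Qe=e$) with $Q^\top A(G)Q=A(H)$, and hence $Q^\top W(G)=W(H)$. In the almost controllable case $Q$ is not determined by this relation alone. I will first show, as in \cite{Wang2021}, that $Q$ can be chosen so that
\[
Q^\top \widetilde W(G)=\widetilde W(H)\,D,
\]
where $D$ is a diagonal sign matrix acting only on the last column. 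The point is that $Q^\top\xi(G)$ spans $\mathcal N(W(H)^\top)$ and has the same norm as $\xi(H)$ up to the normalising factor. Writing $\ell=\ell(Q)$ for the level of $Q$ (the least positive integer with $\ell Q$ integral), this identity gives $\ell Q^\top=\ell\,\widetilde W(H)D\,\widetilde W(G)^{-1}$. Combining it with the Smith normal form of $\widetilde W(G)$ yields $\ell\mid \widetilde d_n$. The goal is then to prove $\ell=1$, because a regular rational orthogonal integral matrix is a permutation matrix, and so $H\cong G$.

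Suppose for contradiction that a prime $p$ divides $\ell$. By (i) we have $p\,\|\,\widetilde d_n$, so over $\mathbb F_p$ the matrix $\widetilde W(G)$ has rank exactly $n-1$. Moreover, the reduction $\bar u$ of any column $u$ of $\ell Q$ is a nonzero vector in the one-dimensional space $\ker \widetilde W(G)^\top$ over $\mathbb F_p$. From $u^\top u=\ell^2$ and $u^\top e=\ell$ we get $\bar u^\top\bar u=0$ and $\bar u^\top e=0$. From $Q^\top A(G)Q=A(H)$ we also obtain that $\bar u$ is an eigenvector of both $A(G)$ and $A(G)+J$ over $\bar{\mathbb F}_p$. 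I will then split into three cases.

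\textbf{Case $p=2$.} I will use the factor $2^{1-\lfloor n/2\rfloor}$ built into $\widetilde W$, together with the standard facts that $e^\top A^k e$ is even for $k\ge1$ and that $W^\top W\equiv$ a Hankel matrix of walk counts modulo $2$. The aim is to show that a nonzero isotropic vector in $\ker_{\mathbb F_2}\widetilde W(G)^\top$ would force $4\mid\widetilde d_n$, contradicting square-freeness.

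\textbf{Case $p$ odd, $p\ne\ell_0$.} The eigenvalue $\lambda$ of $\bar u$ is a root of $\Phi_p(G;x)$. The usual argument for controllable graphs produces $\lambda$ as a multiple root with a $2$-dimensional isotropic structure. Condition (iii), that $\Phi_p$ has exactly two distinct roots, should pin down which roots can occur. I will aim to show that the kernel vector is forced to lie in a root space incompatible with $\bar u^\top\bar u=0$ unless $p^2\mid\det\widetilde W$.

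\textbf{Case $p=\ell_0$.} Here $\eta^\top\eta=2\ell_0$, so the reduction of $\eta$ is itself isotropic modulo $p$, which explains why this prime must be treated separately. I expect this case to be the \textbf{main obstacle}. My first attempt will compare $\bar u$ with $\bar\eta$: both are annihilated by the first $n-1$ columns of $\widetilde W(G)^\top$. Then $\ell_0\,\|\,\det\widetilde W$ from (ii) should give that $\bar u$ is proportional to $\bar\eta$. I will then exploit $Q^\top\eta=\pm\eta_H$ with $\eta_H$ primitive, and compare the $p$-adic valuations of $\ell Q^\top\eta$ and of $\ell\eta_H$, to derive a contradiction. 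If $\ell_0=1$, this case is vacuous.

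Once all three cases are excluded, $\ell=1$ and $G$ is DGS.
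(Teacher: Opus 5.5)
The paper does not prove this theorem; it quotes it from Lin et al. Your outline therefore has to stand on its own, and it has genuine gaps.

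\textbf{The nullity claim.} You assert that $p\mid\mid\widetilde d_n$ forces $\rank_{\mathbb F_p}\widetilde W(G)=n-1$. This is false: the nullity over $\mathbb F_p$ is the number of invariant factors divisible by $p$, which (i) does not control. The last-column cofactors of $\widetilde W(G)$ are those of $W(G)$, so $\det\widetilde W(G)=2^{1-\lfloor n/2\rfloor}\xi^\top\xi=2^{2-\lfloor n/2\rfloor}g^2\ell_0$ with $g=\gcd(W_{1n},\ldots,W_{nn})$. Hence every odd $p\neq\ell_0$ dividing $\widetilde d_n$ divides $\det\widetilde W(G)$ to an even power, and the nullity there is exactly $2$, as the paper remarks after Theorem~\ref{equivC}; at $p=2$ it is $\lfloor n/2\rfloor$.

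Consequences for your cases:
\begin{itemize}
\item Your step ``$\bar u$ lies in a one-dimensional kernel, hence is a common eigenvector'' fails exactly where (iii) matters.
\item Your intended contradiction $p^2\mid\det\widetilde W$ is no contradiction, since it always holds for such $p$.
\item What you actually have is that the column space $\mathcal C$ of $\ell Q \bmod p$ is a nonzero, totally isotropic, $A(G)$-invariant subspace of $\ker\widetilde W(G)^\top$. By (iii) and the bound (number of roots of $\Phi_p$) $\le$ (nullity), this kernel is spanned over $\bar{\mathbb F}_p$ by common eigenvectors $\gamma_1,\gamma_2$ of $A(G)$ and $A(G)+J$ with distinct eigenvalues. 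So $\mathcal C$ contains some $\gamma_i$.
\item The missing core is to rule that out, e.g.\ by proving $\gamma_i^\top\gamma_i\neq0$. That must use $p\mid\mid\widetilde d_n$ together with the walk structure of $\widetilde W(G)$; it does not follow from the Smith normal form alone.
\item Your $p=2$ target is false as stated. Every $z\in\ker_{\mathbb F_2}\widetilde W(G)^\top$ satisfies $z^\top z=z^\top e=0$, and this kernel has dimension $\lfloor n/2\rfloor>0$ even though $4\nmid\widetilde d_n$. Genuinely $2$-adic input, such as $u^\top u=\ell^2\equiv0\pmod 4$, is required.
\end{itemize}

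\textbf{The case $p=\ell_0>1$ aims at a false statement.} Call $Q$ admissible if it is regular, rational, orthogonal with $Q^\top A(G)Q=A(H)$. Put $R=I-\frac{1}{\ell_0}\eta\eta^\top$. Then:
\begin{itemize}
\item $R$ is symmetric with $R^2=I$ and $Re=e$;
\item $RA(G)=A(G)R$, because $A(G)\eta=\lambda_0\eta$;
\item $R^\top\widetilde W(G)=\widetilde W(G)\diag(1,\ldots,1,-1)$;
\item $R$ has level $\ell_0$, because $\eta$ is primitive.
\end{itemize}
For $H=G$, the matrix $Q=R$ satisfies every relation you use, yet $\ell_0\mid\ell(Q)$. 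So no comparison of $\ell_0$-adic valuations of $\ell Q^\top\eta$ and $\ell\eta_H$ can produce a contradiction. In general, $RQ$ is admissible whenever $Q$ is.

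The missing idea is to exploit this freedom rather than fix $Q$:
\begin{itemize}
\item By (ii), $\ell_0\nmid g$, so the nullity at $\ell_0$ is $1$ and $\bar\eta$ spans the kernel. Hence $\ell Q=\eta c^\top+\ell_0N$ with $c,N$ integral and $\bar c\neq0$.
\item Writing $Q^\top\eta=\mu\eta_H$ gives $2c+N^\top\eta=s\eta_H$ with $s=\ell\mu/\ell_0\in\mathbb Z$.
\item Reducing $(\ell Q)^\top(\ell Q)=\ell^2I$ modulo $\ell_0^2$ gives $s(c\eta_H^\top+\eta_Hc^\top)-2cc^\top\equiv0\pmod{\ell_0}$. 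This forces $c\equiv s\eta_H$.
\item Therefore $\ell\,RQ=\eta(c-s\eta_H)^\top+\ell_0N\equiv0\pmod{\ell_0}$, i.e.\ $\ell(RQ)\mid\ell(Q)/\ell_0$.
\end{itemize}
So you can always pass to an admissible $Q$ whose level is prime to $\ell_0$, and only for such a $Q$ can the remaining primes be excluded.
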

	
	For a graph $G\in \mathcal{H}_n$, let $\{\lambda_1,\ldots,\lambda_{n-1}\}\cup \{\lambda_0\}$ be the partition of the spectrum of $G$ such that $\lambda_1,\ldots,\lambda_{n-1}$ are main eigenvalues; the remaining unique eigenvalue $\lambda_0$ is the plain eigenvalue \cite{Hayat2017} (or non-main eigenvalue if $G$ has no multiple eigenvalues). The main polynomial  \cite{Cvetkovic2010} of $G$ is 
	\begin{equation*}
		m(G;x)=(x-\lambda_1)(x-\lambda_2)\cdots(x-\lambda_{n-1}).
	\end{equation*} 
	It is known \cite{Rowlinson2007} that $m(G;x)=\chi(A|_U;x)$, where $U=\col W(G)$, the column space of $W(G)$.
	
	The main result of this paper is the following theorem, which can be regarded as an extension of Theorem \ref{lc} from controllable graphs to almost controllable graphs.
	
	\begin{theorem}\label{main}
		Let $G_0\in \mathcal{H}_{n_0}$ satisfy the conditions of Theorem \ref{lnc}. Let $G_1,G_2,\ldots$ be a sequence of graphs defined by 
		\begin{equation*}
			G_i=\begin{cases}
				G_{i-1}\cup K_1,&\text{if $i$ is odd,}\\
				G_{i-1}\vee K_1,&\text{if $i$ is even.}
			\end{cases}
		\end{equation*}
		If the main polynomials satisfy 
		\begin{equation*}
			(|m(G_0;0)|,|m (G_1^c;0)|)=\begin{cases}
				(1,2),&\text{if $n_0$ is even,}\\
				(2,1),&\text{if $n_0$ is odd,}
			\end{cases}
		\end{equation*}
		then each graph $G_i$ satisfies the conditions of Theorem \ref{lnc} and hence is DGS.
	\end{theorem}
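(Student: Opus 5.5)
We argue by induction on $i$, following the scheme of Liu et al.~\cite{Liu2019}. We track three things along the sequence: almost controllability, the determinant of $\widetilde{W}(G_i)$, and the invariants $\ell_0(G_i)$ and $\Phi_p(G_i;x)$. The first point is that the operations preserve almost controllability. Since $G\vee K_1=(G^c\cup K_1)^c$ and $W(G^c)$ has the same column space as $W(G)$ (because $A(G^c)=J-I-A$), it suffices to treat $G\cup K_1$. The orthogonal complement of $\col W$ is generated by the plain eigenvector $\eta$, and $\eta$ extends by a zero coordinate to a non-main eigenvector of $G\cup K_1$. One checks that the new vertex contributes a main eigenvalue $0$ exactly when $0$ is not already a root of $m(G;x)$, so that $\rank W$ rises by one. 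The hypothesis $m(G_0;0)\neq 0$ therefore matters. Along the way we also obtain $\eta(G_i)=(\eta(G_{i-1}),0)$, and hence $\ell_0(G_i)=\ell_0(G_0)$ for all $i$.

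The core step is an exact identity for $\det\widetilde{W}(G\cup K_1)$. Writing $A'=\diag(A,0)$, we have $A'^k e'=(A^k e,\,\delta_{k0})$. Column operations using the main polynomial then reduce $[e',A'e',\ldots]$ to a block form, with the factor $m(G;0)$ appearing as in the controllable case, where $\det A$ appeared. Concretely, I expect an identity of the form
\[
\det\widetilde{W}(G\cup K_1)=\pm\, 2^{\epsilon}\, m(G;0)\,\det\widetilde{W}(G),\qquad \epsilon\in\{0,1\},
\]
with $\epsilon$ governed by the parity of $n$ through the normalizing factor $2^{1-\lfloor n/2\rfloor}$. For the complement we will use $\widetilde{W}(G^c)=\widetilde{W}(G)\cdot(\text{unimodular})$ up to sign; this holds because $W(G^c)=W(G)T$ with $T$ unitriangular, and the cofactor vector $\xi$ changes only by sign. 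Together these give the analogous identity for the join. Along the sequence we then also need the main-polynomial recursions: $m(G\cup K_1;x)=x\,m(G;x)$ up to identification, and for the complement, $|m(G_{i}^c;0)|$ values cycling between $1$ and $2$. These come from the relation between the main polynomials of $G$ and $G^c$, which is the main-polynomial analogue of the $\det A(G^c)$ computation in \cite{Liu2019}. The outcome is that $|\det\widetilde{W}(G_i)|$ equals $|\det\widetilde{W}(G_0)|$ multiplied only by a power of $2$ that exactly cancels the normalization. So $|\det\widetilde{W}(G_i)|=|\det\widetilde{W}(G_0)|$, or at least it has the same odd part and no new prime factors.

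Given this, the three conditions of Theorem \ref{lnc} transfer as follows.
\begin{itemize}
\item Condition (i): $\widetilde{d}_n$ divides $\det\widetilde{W}$. We show that the SNF of $\widetilde{W}(G_i)$ equals that of $\widetilde{W}(G_{i-1})$ with an extra factor $1$, using the block structure found above. Square-freeness of the last invariant factor is then inherited.
\item Condition (ii): this is immediate, since $\ell_0$ is unchanged and $\det\widetilde{W}$ is unchanged up to sign.
\item Condition (iii): for each odd prime $p\mid\widetilde{d}_n$ with $p\ne\ell_0$, we must show that $\Phi_p(G_i;x)$ keeps exactly two distinct roots. Over $\mathbb{F}_p$, $\chi(A';x)=x\chi(A;x)$ and $\chi(A'+J;x)$ is expressible through $\chi(A;x)$ and the main-part generating function. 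Since $p\nmid m(G;0)$, the new root $0$ does not enter the gcd and no root is lost. The join case follows by complementation, because $\Phi_p(G^c;x)=\pm\Phi_p(G;-1-x)$, which preserves the number of distinct roots.
\end{itemize}

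The main obstacle is the exact determinant identity for $\widetilde{W}(G\cup K_1)$. The last column of $\widetilde{W}$ involves cofactors of the singular matrix $W$ rather than a power of $A$, so we must relate $\xi(G\cup K_1)$ to $\xi(G)$ explicitly. The route I would try first is to use $\xi=c\,\eta$, expressing $c$ via $\det[\,W_{n-1}\mid \eta\,]$ where $W_{n-1}$ is the first $n-1$ columns; this quantity is computable through the main polynomial. I would then carry out the column reduction on both $W_{n-1}$ and this scalar simultaneously, keeping careful track of the powers of $2$.
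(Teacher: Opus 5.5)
Your overall architecture matches the paper's: induction, reduction of the join to the union by complementation, $\ell_0$ preserved via $(0,\eta^\top)^\top$, and $\Phi_p$ roots preserved under the union and sent to $-1-\lambda$ under complement. (Only ``no root is lost'' is needed there; at most two roots is automatic from the nullity bound.) However, the two quantitative ingredients that make the induction close are either wrong or missing.

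\textbf{The determinant identity.} The identity you ``expect'' is not the true one. Write $W(G\cup K_1)=\begin{pmatrix}1&0^\top\\ e&AW(G)\end{pmatrix}$.
\begin{itemize}
\item The cofactors of its last column are $0$ and $\adj(A)\xi(G)=(\det A/\lambda_0)\,\xi(G)=\pm m(G;0)\,\xi(G)$, because $\xi(G)$ is an eigenvector for the plain eigenvalue $\lambda_0$.
\item The remaining $n\times n$ block is $[Ae,\ldots,A^{n-1}e,\ \ast\,]$. Comparing it with $A\widetilde W(G)$ costs a second factor $\det A/\lambda_0$.
\end{itemize}
Hence
\[
\det\widetilde W(G\cup K_1)=\pm 2^{\lfloor n/2\rfloor-\lfloor (n+1)/2\rfloor}\,m(G;0)^2\,\det\widetilde W(G),
\]
with a perturbation $A\mapsto A+\epsilon I$ when $\lambda_0=0$. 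The identity is quadratic in $m(G;0)$, and the power of $2$ is $2^{0}$ or $2^{-1}$, not $2^{0}$ or $2^{1}$.

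Under the hypothesis this turns $|\det\widetilde W(G)|=2^{\lfloor n/2\rfloor}\ell_0b^2$ into $|\det\widetilde W(G\cup K_1)|=2^{\lfloor (n+1)/2\rfloor}\ell_0b^2$. So the determinant is \emph{not} invariant: it doubles at every step from odd to even order, and only its odd part is preserved.

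Consequently your Condition (i) argument fails as stated. At those steps the SNF cannot simply acquire an extra invariant factor $1$, since the determinant acquires a factor $2$; in fact an extra $2$ appears. The fix is to verify condition (i) through the determinant alone, using the equivalence (iii)$\Leftrightarrow$(i) of Theorem~\ref{equivC}, i.e.\ in the form of Theorem~\ref{lnc2}.

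\textbf{The main-polynomial hypothesis.} You need $|m(G_{2k};0)|$ and $|m(G_{2k+1}^c;0)|$ to keep the prescribed values $1$ or $2$ along the whole sequence. This is also what guarantees almost controllability at every step. ``The relation between the main polynomials of $G$ and $G^c$'' cannot supply this, because $m(G;0)$ and $m(G^c;0)$ are independent in general. That is precisely why both $m(G_0;0)$ and $m(G_1^c;0)$ appear as separate hypotheses.

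The missing lemma is the two-step identity $m(H;0)=\pm m(G;0)$ for $H=(G\cup K_1)\vee K_1$.
\begin{itemize}
\item $\det A(H)=-\det A(G)$: expand along the row of the vertex created by the union, which is pendant in $H$, and then along the column of the join vertex.
\item $(0,0,\eta^\top)^\top$ is a plain eigenvector of $H$ for the same $\lambda_0$. So $m(\cdot\,;0)=\pm\det A(\cdot)/\lambda_0$ agrees for both graphs.
\item When $\lambda_0=0$, perturb only the $A(G)$ block. Use that the rank of the walk matrix stays $n+1$ nearby.
\end{itemize}
Since $G_{2k+2}=(G_{2k}\cup K_1)\vee K_1$ and $G_{2k+3}^c=(G_{2k+1}^c\cup K_1)\vee K_1$, this yields $|m(G_{2k};0)|=|m(G_0;0)|$ and $|m(G_{2k+1}^c;0)|=|m(G_1^c;0)|$. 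The parity pattern in the hypothesis matches the parities of $n_0+2k$ and $n_0+2k+1$ respectively. Without this lemma your induction does not get past $G_2$.
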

	
	The proof of Theorem \ref{main} will be given in Section \ref{pf}. We end this section by introducing some equivalent but more convenient descriptions of the conditions in Theorem \ref{lnc}.
	
	\begin{theorem}[\cite{Lin2026}]\label{equivC}
		Let $G\in \mathcal{H}_n$ and $\widetilde{d}_n$ be the last invariant factor of $\widetilde{W}(G)$. Suppose $\ell_0$ is either 1 or an odd prime with $\ell_0\mid\mid \det \widetilde{W}(G)$. Then the following statements are equivalent:\\
		\noindent\textup{(i)} $\widetilde{d}_n$ is square-free;\\	
		\noindent\textup{(ii)} the SNF of $\widetilde{W}(G)$ is 
		\begin{equation*}
			\widetilde{S}=\diag(\underbrace{1,1,\ldots,1}_{\lceil\frac{n}{2}\rceil}, \underbrace{2,2,\ldots,2,2b,2\ell_0 b}_{\lfloor\frac{n}{2}\rfloor}),
		\end{equation*}
		where $b$ is odd and square-free;\\
		\noindent\textup{(iii)} $\det \widetilde{W}(G)=\pm 2^{\lfloor\frac{n}{2}\rfloor}\ell_0b^2$, where $b$ is odd and square-free.
	\end{theorem}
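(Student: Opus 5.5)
We need to prove Theorem~\ref{equivC}, the equivalence of (i), (ii), (iii) under the standing hypothesis that $\ell_0$ is $1$ or an odd prime with $\ell_0\mid\mid\det\widetilde{W}(G)$. The proof needs two structural facts about $\widetilde{W}=\widetilde{W}(G)$ for $G\in\mathcal{H}_n$. We take them from \cite{Wang2021,Qiu2022}, or rederive them by working over $\mathbb{F}_2$ and $\mathbb{F}_{\ell_0}$.

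\textbf{Fact A (the 2-part).} The first $\lceil n/2\rceil$ invariant factors of $\widetilde{W}$ are $1$. Every remaining invariant factor is divisible by $2$, and the 2-part of $\det\widetilde{W}$ is exactly $2^{\lfloor n/2\rfloor}$. This is the almost controllable analogue of the facts $\rank_2 W(G)\le\lceil n/2\rceil$ and $2^{\lfloor n/2\rfloor}\mid\det W$. The normalizing factor $2^{1-\lfloor n/2\rfloor}$ in the last column is chosen precisely so that the 2-adic valuation becomes exactly $\lfloor n/2\rfloor$. For the rank statement we would use that $e^\top A^k e$ is even, or more precisely that $(A^ie)^\top A^je\equiv$ a pattern forcing $\col_{\mathbb{F}_2}$ to be totally isotropic.

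\textbf{Fact B (the $\ell_0$-part).} $\ell_0$ divides $\det\widetilde{W}$. Indeed $\widetilde{W}^\top\eta$ has zeros in its first $n-1$ entries, and its last entry is a multiple of $\eta^\top\eta=2\ell_0$. This gives a vector that is zero modulo $\ell_0$ in a suitable sense, so $\rank_{\ell_0}\widetilde{W}\le n-1$.

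\textbf{Proof of (ii)$\Rightarrow$(iii)$\Rightarrow$(i).} (ii)$\Rightarrow$(iii) is immediate: we multiply the diagonal entries, getting $\det\widetilde{W}=\pm 2^{\lfloor n/2\rfloor}\ell_0 b^2$ (the sign comes from the unimodular factors). For (iii)$\Rightarrow$(i), we write the SNF as $d_1\mid\cdots\mid d_n$.
\begin{itemize}
\item By Fact A, $d_i=1$ for $i\le\lceil n/2\rceil$ and $2\,\|\,d_i$ for the rest, so the 2-part of $\widetilde{d}_n$ is exactly $2$.
\item For an odd prime $p\mid b$ with $p\ne\ell_0$, we have $p^2\,\|\,\det\widetilde{W}$. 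If $p^2\mid\widetilde{d}_n$, then all of $p^2$ sits in $d_n$ alone. That means $\rank_p\widetilde{W}=n-1$, which is not yet a contradiction, so we need extra input. We would show that the $p$-rank deficiency of $\widetilde{W}$ is at least $2$ whenever $p\mid\det\widetilde{W}$ with $p\nmid 2\ell_0$. The argument is the standard "symmetric pairing'': if $z^\top\widetilde{W}\equiv0\pmod p$, then the $A$-invariance forces $z^\top z\equiv0$, and the theory of \cite{Wang2017} gives an even-dimensional isotropic kernel.
\item If instead $p=\ell_0$, then $\ell_0\,\|\,\det$ gives $\ell_0\,\|\,\widetilde{d}_n$ directly.
\end{itemize}
In every case $\widetilde{d}_n$ is square-free.

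\textbf{Proof of (i)$\Rightarrow$(ii).} Assume $\widetilde{d}_n$ is square-free. Then each $d_i$ is square-free and $d_1\mid\cdots\mid d_n$. Fact A fixes the 2-parts. For an odd prime $p\neq\ell_0$ dividing $\det\widetilde{W}$, the deficiency argument above shows $p$ divides at least two of the $d_i$. So the odd part outside $\ell_0$ appears as $b$ in both $d_{n-1}$ and $d_n$, where $b$ is square-free and odd. To see that no odd prime divides $d_{n-2}$, we would use that the $p$-kernel of $\widetilde{W}^\top$ is annihilated by a pair structure of dimension exactly $2$. The cleanest route is to quote the result of \cite{Lin2026} that odd primes $p\ne\ell_0$ have $\rank_p\widetilde{W}\ge n-2$, the analogue of the controllable-case bound. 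Finally, $\ell_0\,\|\,\det$ together with Fact B puts $\ell_0$ only in $d_n$. This yields exactly the SNF in (ii).

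\textbf{Main obstacle.} The main obstacle is the precise $p$-rank information for odd $p\ne\ell_0$: deficiency is even and, under square-freeness, equal to $2$. We would prove it by transporting the controllable-graph argument through the relation between $\widetilde{W}$ and $W$ modulo $p$.
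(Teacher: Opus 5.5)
Your cycle (ii)$\Rightarrow$(iii)$\Rightarrow$(i)$\Rightarrow$(ii) is a reasonable skeleton, but both load-bearing steps have genuine gaps.

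\textbf{The $2$-part.} Fact A is not a property of every $G\in\mathcal H_n$. In general one only has $\rank_2\widetilde W\le\lceil n/2\rceil$ and $2^{\lfloor n/2\rfloor}\mid\det\widetilde W$. For instance, if all degrees are even, then $A^ke\equiv 0\pmod 2$ for $k\ge1$ and $\rank_2\widetilde W\le 2$. That the $2$-part is exactly $2^{\lfloor n/2\rfloor}$ is part of what (iii) asserts, which is why $b$ is required to be odd there.

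In (iii)$\Rightarrow$(i) the weak form is enough, because (iii) pins $v_2(\det\widetilde W)=\lfloor n/2\rfloor$. For the rank bound you must also handle the last column, which is orthogonal to $\col W'$, $W'=[e,Ae,\dots,A^{n-2}e]$, and has even norm.

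In (i)$\Rightarrow$(ii), however, ``Fact A fixes the 2-parts'' assumes what is to be proved. Square-freeness of $\widetilde d_n$ only gives $v_2(d_i)\le 1$, so the number of even invariant factors equals $v_2(\det\widetilde W)$. Ruling out that this exceeds $\lfloor n/2\rfloor$ is the $2$-adic half of Lin et al.'s Theorem~7, and you give no argument for it.

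\textbf{Odd primes $p\ne\ell_0$.} The ``symmetric pairing'' step is false. Write $\xi=g\eta$ and $h=2^{1-\lfloor n/2\rfloor}g\in\mathbb Z$. Then the last column of $\widetilde W$ is $h\eta$, and $\widetilde W^\top\eta=(0,\dots,0,2h\ell_0)^\top$. Whenever an odd $p\ne\ell_0$ divides $\det\widetilde W$, one has $p\mid h$ (see below). So $\eta$ is a left kernel vector modulo $p$ with $\eta^\top\eta=2\ell_0\not\equiv 0\pmod p$.

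Thus kernel vectors need not be isotropic. Even if they were, isotropy gives no lower bound on the dimension of the kernel. The bound $\rank_p\widetilde W\ge n-2$ you propose to quote also does not hold for arbitrary $G\in\mathcal H_n$; it has to be derived from (i).

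\textbf{The missing idea.} Expand $\det\widetilde W$ along its last column, whose cofactors are exactly the entries of $\xi$:
\[
\det\widetilde W=\xi^\top(h\eta)=2gh\ell_0=2^{\lfloor n/2\rfloor}\ell_0h^2 .
\]
Hence an odd prime $p\ne\ell_0$ divides $\det\widetilde W$ iff $p\mid g$. In that case every maximal minor of $W'$ and the whole last column vanish mod $p$, so the nullity is at least $2$. Also, (iii) simply says that $h$ is odd and square-free.

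For (i)$\Rightarrow$(ii) you need one more observation. If $x\in\mathbb Z^n$ lies in the rational span of $W'$, then $\widetilde d_nx=W'y+t\,h\eta$ with $y,t$ integral, and pairing with $\eta$ gives $t=0$. Hence every invariant factor of $W'$ divides $\widetilde d_n$.

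Under (i) this yields, for every prime $p$,
\[
v_p(g)=(n-1)-\rank_pW', \qquad v_p(\det\widetilde W)=n-\rank_p\widetilde W .
\]
For odd $p\mid h$ with $p\neq\ell_0$, this reads $2v_p(g)=1+v_p(g)$. So $v_p(g)=1$ and the nullity is exactly $2$. For $p=2$ it gives $\rank_2W'=\lceil n/2\rceil-v_2(h)$ and $\rank_2\widetilde W=\lceil n/2\rceil-2v_2(h)$. Since $\rank_2\widetilde W\ge\rank_2W'$, this forces $v_2(h)=0$.

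These are exactly the statements your Fact A and the quoted rank bound were standing in for. The paper itself does not prove them either: it cites Lin et al., Theorem~7 for (i)$\Rightarrow$(ii) and Lemmas~4 and~7 for (iii)$\Rightarrow$(ii).
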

	
	We remark that only the implication (i)$\implies$(ii) was explicitly stated in \cite[Theorem 7]{Lin2026}. The implications (ii)$\implies$(i) and (ii)$\implies$(iii) are straightforward; and the implication (iii)$\implies$(ii) can be proved in a similar way using \cite[Lemmas 4 and 7]{Lin2026}; see \cite{Lin2026} for details. 
	
	Suppose $G\in \mathcal{H}_n$ satisfies the first two conditions of Theorem \ref{lnc}. Let $p$ be any odd prime factor of $\widetilde{d}_n$ (or equivalently of $\det \widetilde{W}(G)$) with $p\neq \ell_0$. Then by Theorem \ref{equivC} (ii), we find that the nullity of $\widetilde{W}(G)$ over $\mathbb{F}_p$ is exactly two. Since the number of the roots of $\Phi_p(G;x)$ in $\bar{\mathbb{F}}_p$ is upper bounded by the nullity of $\widetilde{W}(G)$ over $\mathbb{F}_p$ (see \cite[Lemma 14]{Lin2026}), we find that $\Phi_p(G;x)$ has at most two distinct roots. This means that we may replace the last condition of Theorem \ref{lnc} with the following equivalent, more readily verifiable criterion:
	
	\noindent(iii') $\Phi_p(G;x)$ has \emph{at least} two distinct roots over $\bar{\mathbb{F}}_p$ for any odd prime factor $p$ of $\widetilde{d}_n$ with $p\neq \ell_0$.
	
	Now, we can give an equivalent form of Theorem \ref{lnc}, a form that will be used in proving our main theorem (Theorem \ref{main}).
	
	\begin{theorem}\label{lnc2}
		Let $G\in \mathcal{H}_n$. Suppose the following conditions hold:\\
		\noindent\textup{(i)} $\det \widetilde{W}(G)=\pm 2^{\lfloor\frac{n}{2}\rfloor}\ell_0p_1^2 p_2^2 \cdots p_m^2$, where $m\ge 0$ and $p_i$'s are $m$ distinct odd primes;
		\\
		\noindent\textup{(ii)} $\ell_0$ is either $1$ or an odd prime with $\ell_0\not\in\{p_1,\ldots,p_m\}$;\\
		\noindent \textup{(iii)} $\Phi_{p}(G;x)$ has at least two distinct roots over $\bar{\mathbb{F}}_{p}$ for any $p\in\{p_1,p_2,\ldots,p_m\}$.
		
		Then $G$ is DGS.
	\end{theorem}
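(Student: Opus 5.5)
We reduce Theorem \ref{lnc2} to Theorem \ref{lnc} by checking its three conditions in turn, using Theorem \ref{equivC} to handle the square-free condition.

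\textbf{Condition (ii).} Hypothesis (i) gives $\det\widetilde{W}(G)=\pm 2^{\lfloor n/2\rfloor}\ell_0 p_1^2\cdots p_m^2$. If $\ell_0\neq 1$, hypothesis (ii) says $\ell_0$ is an odd prime not among the $p_i$. Since $2$ and the $p_i$ are all different from $\ell_0$, we get $\ell_0\mid\mid\det\widetilde{W}(G)$. Hence condition (ii) of Theorem \ref{lnc} holds in both cases $\ell_0=1$ and $\ell_0$ an odd prime.

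\textbf{Condition (i).} The standing hypothesis of Theorem \ref{equivC} is now satisfied, so we may apply its implication (iii)$\implies$(i) with $b=p_1p_2\cdots p_m$. This $b$ is odd and square-free because the $p_i$ are distinct odd primes; when $m=0$ we take $b=1$. Therefore $\widetilde{d}_n$ is square-free, which is condition (i) of Theorem \ref{lnc}. Theorem \ref{equivC}(ii) also gives the full SNF,
$$\widetilde{S}=\diag(1,\ldots,1,2,\ldots,2,2b,2\ell_0 b),$$
which we use in the next step.

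\textbf{Condition (iii).} Let $p$ be an odd prime factor of $\widetilde{d}_n$ with $p\neq\ell_0$. Since $\widetilde{d}_n=2\ell_0 b$, we have $p\mid b$, so $p\in\{p_1,\ldots,p_m\}$. Because $p$ divides exactly two invariant factors ($2b$ and $2\ell_0b$), the nullity of $\widetilde{W}(G)$ over $\mathbb{F}_p$ equals two. By \cite[Lemma 14]{Lin2026}, the number of distinct roots of $\Phi_p(G;x)$ in $\bar{\mathbb{F}}_p$ is at most this nullity, namely two. Hypothesis (iii) gives at least two roots, so $\Phi_p(G;x)$ has exactly two distinct roots. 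This is condition (iii) of Theorem \ref{lnc}.

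With all three conditions verified, Theorem \ref{lnc} shows that $G$ is DGS.

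The only step needing care is the appeal to the implication (iii)$\implies$(ii) of Theorem \ref{equivC}, which \cite{Lin2026} does not state explicitly. We take it as given, as the paper does. If a self-contained argument were wanted, one would combine \cite[Lemmas 4 and 7]{Lin2026}. These lemmas force the first $\lceil n/2\rceil$ invariant factors to be $1$, the remaining $\lfloor n/2\rfloor$ to be even, and $\ell_0$ to divide $\widetilde{d}_n$. Matching this with the prime factorisation of the determinant then pins down the SNF.
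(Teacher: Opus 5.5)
Your proof is correct and follows the paper's own argument. Hypothesis (i) gives $\ell_0\mid\mid\det\widetilde{W}(G)$. Theorem~\ref{equivC} then turns the determinant factorization into square-freeness of $\widetilde{d}_n$ and the explicit SNF, which gives nullity two over $\mathbb{F}_p$ for each $p\mid b$. Finally, \cite[Lemma 14]{Lin2026} upgrades ``at least two'' roots of $\Phi_p(G;x)$ to ``exactly two''.

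One caveat concerns only your optional closing sketch. The facts you list, together with the factorization of $\det\widetilde{W}(G)$, do not by themselves rule out $p^2$ sitting entirely in the last invariant factor. You would also need nullity at least two for $p\mid b$. This follows from $\det\widetilde{W}(G)=2^{1-\lfloor n/2\rfloor}\xi^\top\xi=2^{2-\lfloor n/2\rfloor}\ell_0 g^2$ with $g=\gcd(W_{1n},\ldots,W_{nn})$, obtained by cofactor expansion along the last column. That identity forces $p\mid g$. Hence $p$ divides every maximal minor of the first $n-1$ columns and every entry of the last column.
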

	
	\section{Complement-invariant property}
	
	Let $\mathcal{P}$ be a graph property, that is, $\mathcal{P}$ is a Boolean-valued function on the set of all simple graphs satisfying the isomorphic-invariant condition: if $G$ is isomorphic to $H$, then $\mathcal{P}(G)=\mathcal{P}(H)$. We say that a graph $G$ possesses property $\mathcal{P}$ if $\mathcal{P}(G)=1$. A property $\mathcal{P}$ is called \emph{complement-invariant} if $\mathcal{P}(G)=\mathcal{P}(G^c)$ for any graph $G$. Note that a graph $G$ is DGS if and only if its complement $G^c$ is DGS. Thus, the DGS-property is complement-invariant.
	
	In \cite{Liu2019}, Liu et al. proved that for any simple graph $G$, there exists a unimodular upper-triangular integer matrix $U$ such that 
	\begin{equation}\label{dwc}
		W(G^c)=W(G)U,
	\end{equation} 
	which implies $\det W(G^c)=\pm \det W(G)$. Let $\mathcal{P}_1$ be the property described as the condition of Theorem \ref{wt}. That is, an $n$-vertex graph has property $\mathcal{P}_1$ if $2^{-\lfloor n/2\rfloor}\det W(G)$ is odd and square-free. According to Eq.~\eqref{dwc}, we know that the property $\mathcal{P}_1$ is complement-invariant. Now, we restrict ourselves to $\mathcal{H}_n$, the family of almost controllable graphs of order $n$. It is known that $G\in \mathcal{H}_n$ if and only if $G^c\in \mathcal{H}_n$. Let $\mathcal{P}_2$ be the property described as the three conditions in Theorem \ref{lnc2}. 
	
	\begin{theorem}
		The property $\mathcal{P}_2$ is complement-invariant.
	\end{theorem}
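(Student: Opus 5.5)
We show that each of the three conditions of Theorem \ref{lnc2} is separately invariant under complementation. We work throughout with $G\in\mathcal{H}_n$, so $G^c\in\mathcal{H}_n$. Write $W=W(G)$, $W^c=W(G^c)$, $A^c=J-I-A$.

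\textbf{Step 1: the invariant $\ell_0$ is unchanged.} By Eq.~\eqref{dwc}, $W^c=WU$ with $U$ unimodular. Hence $(W^c)^\top x=0$ if and only if $W^\top x=0$, so $\mathcal{N}(W(G)^\top)=\mathcal{N}(W(G^c)^\top)$. The primitive integer generator $\eta$ is unique up to sign, so $\eta(G^c)=\pm\eta(G)$ and $\ell_0(G^c)=\ell_0(G)$. Condition (ii) depends only on $\ell_0$ and the primes $p_i$, so it transfers once Step 2 shows the $p_i$ are the same.

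\textbf{Step 2: $\det\widetilde W(G^c)=\pm\det\widetilde W(G)$.} Since $U$ is unimodular upper triangular, its diagonal entries are $\pm1$. The first $n-1$ columns of $W^c$ are therefore integer combinations of the first $n-1$ columns of $W$, with unimodular transition matrix $U'$ (the leading $(n-1)\times(n-1)$ block of $U$). This is also visible directly: $(J-I-A)^k e$ lies in the $\mathbb Z$-span of $e,Ae,\dots,A^ke$, with leading coefficient $(-1)^k$. The $i$-th entry of $\xi(G)$ is the cofactor $W_{in}$, i.e.\ $\pm$ the $(n-1)$-minor of the first $n-1$ columns with row $i$ deleted. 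Replacing those columns by the columns of $W^c$ multiplies every such minor by $\det U'=\pm1$. So $\xi(G^c)=\pm\xi(G)$.

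It follows that $\widetilde W(G^c)=\widetilde W(G)\,\diag(U',\pm1)$, and the determinants agree up to sign. The factorization in (i), including the primes $p_1,\dots,p_m$, is therefore identical for $G$ and $G^c$.

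\textbf{Step 3: $\Phi_p$ is complement-invariant.} This is the only point needing care. Over $\mathbb F_p$ we have $\chi(A^c;x)=\chi(-(A+I)+J;x)$ and $\chi(A^c+J;x)=\chi(-(A+I)+2J;x)$. We use the standard fact that $\chi(A+tJ;x)$ is affine in $t$:
\[
\chi(A+tJ;x)=\chi(A;x)-t\,e^\top\adj(xI-A)e .
\]
Hence $\gcd(\chi(A;x),\chi(A+J;x))=\gcd(\chi(A;x),\chi(A+tJ;x))$ for any $t\ne0$ in $\mathbb F_p$. Indeed, both gcds equal $\gcd(\chi(A;x),e^\top\adj(xI-A)e)$, since $t$ is invertible mod $p$. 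This requires $t=2\ne0$ and $t=-1$, and $p$ is odd.

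Applying this with $B=-(A+I)$, for which $B$ and $B+2J$ give the same gcd as $B$ and $B+J$, yields
\[
\Phi_p(G^c;x)=\pm\gcd\bigl(\chi(A;-x-1),\chi(A+J;-x-1)\bigr)
\]
up to a unit. The key identity behind this is $\chi(-(A+I)+2J;x)=\pm\chi(A-2J;-x-1)$, combined with the invariance of the gcd in $t\in\{-2,1\}$. Thus $\Phi_p(G^c;x)$ is, up to a unit, $\Phi_p(G;-x-1)$. The substitution $x\mapsto -x-1$ is a bijection on $\bar{\mathbb F}_p$, so the number of distinct roots is preserved and condition (iii) transfers.

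Finally, since $(G^c)^c=G$, the three invariances give $\mathcal P_2(G)=\mathcal P_2(G^c)$.
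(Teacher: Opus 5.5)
Your proof is correct. Steps 1 and 2 follow the paper essentially verbatim. The only difference is that the paper derives $\ell_0(G^c)=\ell_0(G)$ from $\xi(G^c)=\pm\xi(G)$, while you read it off directly from $\mathcal{N}(W(G)^\top)=\mathcal{N}(W(G^c)^\top)$; both are fine.

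Step 3 takes a genuinely different route. The paper argues with eigenvectors over $\bar{\mathbb{F}}_p$. For a root $\lambda$ of $\Phi_p(G;x)$ with eigenvectors $\alpha$ of $A$ and $\beta$ of $A+J$, the identity $\alpha^\top(\lambda I-A)\beta=(e^\top\alpha)(e^\top\beta)=0$ produces a common eigenvector $\gamma$ with $e^\top\gamma=0$. Then $A(G^c)\gamma=(A(G^c)+J)\gamma=(-1-\lambda)\gamma$. This yields only an injection $\lambda\mapsto-1-\lambda$ of root sets, which is all condition (iii) needs. In return it works in every characteristic, and it supplies the vector $\gamma\perp e$ that the paper reuses in the proof of Lemma \ref{lem:l0}.

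Your rank-one formula $\chi(A+tJ;x)=\chi(A;x)-t\,e^\top\adj(xI-A)e$ is more algebraic and gives something stronger. It yields the exact polynomial identity $\Phi_p(G^c;x)=\Phi_p(G;-x-1)$ up to a unit, multiplicities included.

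One step in your write-up is compressed. Your lemma always has $\chi(A;x)$ as one argument, but
\[
\Phi_p(G^c;x)=\gcd\bigl(\chi(B+J;x),\chi(B+2J;x)\bigr),
\]
where neither argument is $\chi(B;x)$. To reach a gcd involving $\chi(B;x)$ you must apply the lemma once more, with base $A(G^c)=B+J$ and $t=-1$. A cleaner way is to set $q_B=e^\top\adj(xI-B)e$ and observe that for any $s\neq t$,
\[
\gcd\bigl(\chi(B+sJ),\chi(B+tJ)\bigr)=\gcd(\chi(B),q_B),
\]
because the transition from $(\chi(B),q_B)$ to $(\chi(B+sJ),\chi(B+tJ))$ has determinant $s-t$. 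Together with $\chi(B;x)=(-1)^n\chi(A;-x-1)$ and $q_B(x)=(-1)^{n-1}q_A(-x-1)$, this gives the identity in one line. It also only uses $1\neq2$ in $\mathbb{F}_p$, so your hypothesis that $p$ is odd becomes unnecessary.
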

	
	\begin{proof}
		Let $G$ be any graph in $\mathcal{H}_n$ that possesses $\mathcal{P}_2$. We need to show that $G^c$ also possesses $\mathcal{P}_2$. Let $U$ be the unimodular upper-triangular matrix $U$ such that Eq.~\eqref{dwc} holds. Let $U'$ be the matrix obtained from $U$ by deleting the last column. Then $U'$ has the form 
		\begin{equation*}
			U'=\begin{pmatrix}
				U_1\\0
			\end{pmatrix},
		\end{equation*}
		where $U_1$ is an $(n-1)\times (n-1)$ unimodular upper-triangular matrix. Let $W(i,n)$ and $W^c(i,n)$ be the $(n-1)\times (n-1)$ submatrices obtained from $W(G)$ and $W(G^c)$ by deleting the $i$-th row as well as the last column, respectively. Noting that the last row of $U'$ is the zero vector, it is easy to see from Eq.~\eqref{dwc} that 
		\begin{equation*}
			W^c(i,n)=W(i,n)U_1.
		\end{equation*}
		It follows that  
		\begin{equation}\label{xx}
			\xi(G^c)=\begin{pmatrix}
				(-1)^{1+n}\det W^c(1,n)\\
				(-1)^{2+n}\det W^c(2,n)\\
				\vdots\\
				(-1)^{n+n}\det W^c(n,n)
			\end{pmatrix}=(\det U_1)
			\begin{pmatrix}
				(-1)^{1+n}\det W(1,n)\\
				(-1)^{2+n}\det W(2,n)\\
				\vdots\\
				(-1)^{n+n}\det W(n,n)
			\end{pmatrix}=\pm \xi(G).
		\end{equation}
		Let 
		\begin{equation*}
			U_2=\begin{pmatrix}
				U_1&0\\
				0&\delta
			\end{pmatrix},
		\end{equation*}
		where $\delta=\det U_1=\pm 1$. Clearly $U_2$ is a unimodular matrix. From Eqs.~\eqref{dwc} and \eqref{xx}, we easily see that the modified walk matrices satisfy
		\begin{equation*}
			\widetilde{W}(G^c)=\widetilde{W}(G)U_2,
		\end{equation*}
		which implies $\det \widetilde{W}(G^c)=\pm \det\widetilde{W}(G)$. This means that Condition (i) holds for $G^c$. Furthermore, by Eq.~\eqref{xx}, we see that $\eta(G)=\pm \eta(G^c)$ and hence $\ell_0(G)=\ell_0(G^c)$. Noting that $\det \widetilde{W}(G^c)=\pm \det\widetilde{W}(G)$, Condition (ii) holds for $G^c$. 
		
		It remains to show that $G^c$ satisfies Condition (iii), i.e., $\Phi_p(G^c)$ has at least two distinct roots over $\bar{\mathbb{F}}_p$ for any $p\in\{p_1,\ldots,p_m\}$. We claim that if $\lambda$ is a root of $\Phi_p(G;x)$ then $-1-\lambda$ is a root of $\Phi_p(G^c;x)$. Let $\lambda\in \bar{\mathbb{F}}_p$ be a root of $\Phi_p(G;x)$, that is, $\lambda$ is a common root eigenvalue of $A$ and $A+J$. Then there exist two nonzero vectors $\alpha$ and $\beta$ such that $A\alpha=\lambda \alpha$ and $(A+J)\beta=\lambda \beta$. Taking the transpose and noting that $A$ is symmetric, we have
		\begin{equation}\label{a1}
			\alpha^\top (\lambda I-A)\beta=\beta^\top (\lambda I-A)\alpha=0.
		\end{equation}
		On the other hand, direct computations show that
		\begin{equation}\label{a2}
			\alpha^\top (\lambda I-A)\beta=\alpha^\top J\beta=\alpha^\top e e^\top \beta=(e^\top \alpha)(e^\top \beta).
		\end{equation}
		Combining Eq.~\eqref{a1} and Eq.~\eqref{a2} leads to $e^\top \alpha =0$ or $e^\top \beta=0$. Let $\gamma=\alpha$ if $e^\top \alpha=0$; and $\gamma=\beta$ otherwise. Then it is easy to see that $J\gamma=0$ and $A\gamma=(A+J)\gamma=\lambda \gamma$. It follows that
		\begin{equation*}
			(A(G^c)+J)\gamma=A(G^c)\gamma = (J - I - A)\gamma= (-1 - \lambda)\gamma.
		\end{equation*}
		This demonstrates that $-1 - \lambda$ is a root of $\Phi_p(G^c; x)$ and hence the claim follows. Since $\Phi_p(G; x)$ has at least two distinct roots, $\Phi_p(G^c; x)$ will also possess at least two distinct roots. Thus, Condition (iii) holds for $G^c$. This completes the proof.
	\end{proof}	
	
	\section{Evolution of  modified walk matrices}
	
	For a controllable graph $G$, Liu et al.~\cite{Liu2019} proved that 
	\begin{equation*}
		\det W(G\cup K_1)=\pm \det A(G)\det W(G),
	\end{equation*}
	which implies that $G\cup K_1$ remains controllable if and only if $A$ is nonsingular, i.e., $G$ does not contain $0$ as an eigenvalue. The following result gives a natural extension of this result to almost controllable graphs.
	
	\begin{theorem} \label{wunion}
		Let $G\in \mathcal{H}_n$. Then $G\cup K_1$ remains almost controllable if and only if $m(G;0) \neq 0$, i.e., $G$ does not contain $0$ as a main eigenvalue. Furthermore, we have
		\begin{equation} \label{dw_union}
			\det \widetilde{W}(G\cup K_1) = \pm2^{\lfloor n/2 \rfloor - \lfloor (n+1)/2 \rfloor} (m(G;0))^2 \det \widetilde{W}(G).
		\end{equation}
	\end{theorem}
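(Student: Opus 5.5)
The plan is to write $A'=A(G\cup K_1)=\diag(A,0)$ and $e'=(e^\top,1)^\top$, so that $A'^ke'=(A^ke;0)$ for every $k\ge 1$. Hence
\begin{equation*}
W(G\cup K_1)=\begin{pmatrix} e & Ae & \cdots & A^{n}e\\ 1 & 0&\cdots&0\end{pmatrix}.
\end{equation*}
Everything will be reduced to the matrix $B=[e,Ae,\ldots,A^{n-2}e]$. Since $G\in\mathcal{H}_n$, the columns of $B$ form a basis of $U=\col W(G)$, which is $A$-invariant. The characteristic polynomial of $A|_U$ is $m(G;x)$, and I will take as known that $m(G;x)\in\mathbb{Z}[x]$ is monic. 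Writing $A^{n-1}e$ in terms of the columns of $B$ via $m(G;A)e=0$, I get $AB=BC$, where $C$ is the $(n-1)\times(n-1)$ integer companion matrix of $m(G;x)$. In particular $\det C=\pm m(G;0)$, and $[Ae,\ldots,A^{n-1}e]=BC$.

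\emph{Rank statement.} Because the last row of $W(G\cup K_1)$ is $(1,0,\ldots,0)$, we have $\rank W(G\cup K_1)=1+\rank[Ae,\ldots,A^ne]$. The column space of $[Ae,\ldots,A^ne]$ is $AU$, whose dimension is $\rank(A|_U)$. So it equals $n-1$, i.e.\ $G\cup K_1$ (of order $n+1$) has rank $n$, exactly when $A|_U$ is invertible, that is, when $m(G;0)\neq 0$. In that case $G\cup K_1\in\mathcal{H}_{n+1}$.

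\emph{The vector $\xi(G\cup K_1)$.} To form the cofactors of the last column I delete the column $(A^ne;0)$. The remaining $n$ columns are
\begin{equation*}
M=\begin{pmatrix} e & BC\\ 1&0\end{pmatrix}.
\end{equation*}
Deleting row $n+1$ of $M$ leaves $W(G)$, which is singular, so the last cofactor is $0$. Deleting row $i\le n$ and expanding along the last row gives $\pm\det (BC)_{\hat i}=\pm\det C\,\det B_{\hat i}$, where $\hat i$ denotes deletion of row $i$. Here $\det B_{\hat i}=\pm W_{in}$, and I expect the signs to be uniform in $i$ because the sign factors $(-1)^{i+n+1}$ and $(-1)^{i+n}$ differ by a constant. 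Hence
\begin{equation*}
\xi(G\cup K_1)=\pm m(G;0)\begin{pmatrix}\xi(G)\\0\end{pmatrix}.
\end{equation*}
As a sanity check, this agrees with the left null vector $(\eta;0)$, obtained from $\eta\perp AU=U$ and $\eta^\top e=0$.

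\emph{Determinant.} Put $c=2^{1-\lfloor(n+1)/2\rfloor}$. Then
\begin{equation*}
\widetilde W(G\cup K_1)=\begin{pmatrix} e & BC & \pm c\,m(G;0)\xi(G)\\ 1&0&0\end{pmatrix}.
\end{equation*}
Expanding along the last row gives $\pm c\,m(G;0)\det[BC,\xi(G)]$. Next,
\begin{equation*}
[BC,\xi(G)]=[B,\xi(G)]\diag(C,1),
\end{equation*}
so $\det[BC,\xi(G)]=\pm m(G;0)\det[B,\xi(G)]$. Finally, $\det\widetilde W(G)=2^{1-\lfloor n/2\rfloor}\det[B,\xi(G)]$. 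Combining these yields \eqref{dw_union}.

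The main obstacle I expect is bookkeeping: keeping the cofactor signs consistent across all $i$, so that $\xi(G\cup K_1)$ is a scalar multiple of $(\xi(G);0)$ rather than merely sign-equivalent entrywise, and justifying the integrality of the companion matrix, i.e.\ that $m(G;x)$ has integer coefficients.
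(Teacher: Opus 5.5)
Your proof is correct, and it takes a genuinely different route from the paper's. Your rank argument is essentially the paper's.

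\textbf{How the paper argues.} It labels the new vertex first and computes $\xi(G\cup K_1)$ through $\adj(AW(G))=\adj(W(G))\adj(A)$. It then uses the plain-eigenvector relation $A\xi(G)=\lambda_0\xi(G)$ to get $\adj(A)\xi(G)=(\det A/\lambda_0)\,\xi(G)=(-1)^{n-1}m(G;0)\,\xi(G)$. For the determinant, it compares the matrix with $A\widetilde W(G)$ column by column and divides by $\det A=(-1)^{n-1}\lambda_0\, m(G;0)$. Both steps need $\lambda_0\neq 0$, so the case $\lambda_0=0$ is handled separately by perturbing $A$ to $A+\epsilon I$ and letting $\epsilon\to 0$.

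\textbf{What your route buys.} The companion-matrix factorization $AB=BC$ on the Krylov basis of $U$ replaces all of that. The cofactors factor directly as $\det(B_{\hat i}C)=\det B_{\hat i}\det C$, and the determinant factors through $[B,\xi(G)]\diag(C,1)$. So the identity holds for every $G\in\mathcal{H}_n$ at once, with no invertibility of $A$, no appeal to $A\xi(G)=\lambda_0\xi(G)$, and no limiting argument. It also shows plainly that one factor $m(G;0)$ comes from $\xi(G\cup K_1)$ and the other from $\det C$. The paper's route, in exchange, ties the factor to the spectral picture: $\adj(A)$ acting on the plain eigenvector.

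\textbf{Your two stated worries.} The sign is indeed uniform. With the new vertex last, for $i\le n$ you have $W'_{i,n+1}=(-1)^{i+n+1}(-1)^{n+1}\det C\,\det B_{\hat i}$. Substituting $\det B_{\hat i}=(-1)^{i+n}W_{in}$ and $\det C=(-1)^{n-1}m(G;0)$ gives $W'_{i,n+1}=-m(G;0)\,W_{in}$. Hence $\xi(G\cup K_1)=-m(G;0)\,(\xi(G)^\top,0)^\top$, an exact scalar multiple.

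Integrality of $m(G;x)$ is never needed. The relations $AB=BC$ and $\det C=(-1)^{n-1}m(G;0)$ hold over $\mathbb{Q}$ or $\mathbb{R}$, and that is all the determinant identity uses.
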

	
	\begin{proof}
		Let $G'=G\cup K_1$. Without loss of generality, we label the newly added isolated vertex as the first vertex of $G'$. The adjacency matrix $A' \in \mathbb{R}^{(n+1) \times (n+1)}$ and the all-ones vector $e' \in \mathbb{R}^{n+1}$ of $G'$ can be written in block form as:
		\begin{equation*}
			A' = \begin{pmatrix} 0 & 0^\top \\ 0 & A \end{pmatrix}, \quad e' = \begin{pmatrix} 1 \\ e \end{pmatrix}.
		\end{equation*}
		Consequently, we obtain
		\begin{equation} \label{wgp}
			W(G') = [e', A'e', \dots, (A')^n e'] = \begin{pmatrix} 1 & \mathbf{0}^\top \\ e & A W(G) \end{pmatrix}.
		\end{equation}
		From Eq.~\eqref{wgp}, we have $\rank W(G') = 1 + \rank (A W(G))$. Note that $\rank W(G)=n-1$ and $\chi(A|_{\col W(G)};x)=m(G;x)$. The rank of $A W(G)$ is equal to $n-1$ if and only if the restriction of $A$ to $\col W(G)$ is non-singular. Clearly, this non-singularity is equivalent to $m(G;0) \neq 0$. Thus, $G'$ is almost controllable if and only if $m(G;0) \neq 0$. Now, assuming $m(G;0) \neq 0$, we analyze the modified walk matrix $\widetilde{W}(G')$. Recall that $\xi(G)$ is the vector whose entries are the algebraic cofactors of the last column of $W(G)$. It is known \cite{Wang2021} that $A(G)\xi(G)=\lambda_0 \xi(G)$, where $\lambda_0$ is the unique plain eigenvalue of $G$. We assume further that $\lambda_0\neq 0$ and the case $\lambda_0=0$ can be settled by a standard argument of continuous perturbation. Let $\lambda_1, \dots, \lambda_{n-1}$ be the $n-1$ main eigenvalues of $G$. Then we have 
		\[ \det A=\lambda_0\prod_{i=1}^{n-1}\lambda_i=(-1)^{n-1} \lambda_0m(G;0)\neq 0. \]
		As $A$ is invertible over $\mathbb{R}$, we find that 
		\begin{equation}\label{adjv}
			\adj(A)\xi(G)=(\det A) (A^{-1}\xi(G))=(\det A) \frac{1}{\lambda_0}\xi(G)=(-1)^{n-1} m(G;0) \xi(G).
		\end{equation}
		
		Let $W'_{i,n+1}$ be the algebraic cofactor of the $(i,n+1)$-entry of $W(G')$. Then, by definition, $\xi(G') = (W'_{1,n+1},W'_{2,n+1},\ldots,W'_{n+1,n+1})^\top$. By Eq.~\eqref{wgp}, we easily see that the first entry satisfies $W'_{1,n+1}=(-1)^{n+2}\det W(G)=0$. For the remaining entries, expanding the cofactors along the first row reveals that $W'_{i,n+1}$, with $i\ge 2$, is exactly the cofactor of the $(i-1,n)$-entry of the submatrix $A W(G)$. Thus, we have 
		\begin{equation*}
			(W'_{2,n+1},\ldots,W'_{n+1,n+1})^\top= (\adj (AW(G)))^\top e_n,
		\end{equation*}
		where $e_n=(0,\ldots,0,1)^\top\in \mathbb{R}^n$.
		Recall the property of the adjugate matrix: $\operatorname{adj}(XY) = \operatorname{adj}(Y)\operatorname{adj}(X)$. Noting that $\adj A$ is symmetric and using Eq.~\eqref{adjv}, we obtain
		\begin{equation*}
			(W'_{2,n+1},\ldots,W'_{n+1,n+1})^\top=\adj(A)(\adj W)^\top e_n=\adj(A)\xi(G)= (-1)^{n-1} m(G;0) \xi(G).
		\end{equation*}
		It follows that
		\begin{equation*}
			\xi(G') = \begin{pmatrix} 0 \\ (-1)^{n-1} m(G;0) \xi(G) \end{pmatrix}.
		\end{equation*}
		This, together with Eq.~\eqref{wgp}, indicates that the modified walk matrix of $G'$ is 
		\begin{equation*}
			\widetilde{W}(G') = \begin{pmatrix} 1 & 0 & \dots & 0 & 0 \\ e & Ae & \dots & A^{n-1}e & 2^{1 - \lfloor \frac{n+1}{2} \rfloor} (-1)^{n-1} m(G;0) \xi(G) \end{pmatrix}.
		\end{equation*}
		Expanding the determinant of $\widetilde{W}(G')$ along the first row, we get $\det \widetilde{W}(G') = \det(B)$, where $B$ is the $n \times n$ lower-right block:
		\begin{equation*} 
			B = \left[ Ae, A^2e, \dots, A^{n-1}e, \ \ 2^{1 - \lfloor \frac{n+1}{2} \rfloor} (-1)^{n-1} m(G;0) \xi(G) \right].
		\end{equation*}
		On the other hand, multiplying $\widetilde{W}(G)$ by $A$ and noting that $A\xi(G)=\lambda_0\xi(G)$, we have
		\begin{equation*}
			A\widetilde{W}(G)= \left[ Ae, A^2e, \dots, A^{n-1}e, \ \ \lambda_0 2^{1 - \lfloor \frac{n}{2} \rfloor} \xi(G) \right].
		\end{equation*}
		Since the first $n-1$ columns of $B$ and $A\widetilde{W}(G)$ are identical, by the multilinearity of determinants with respect to the last column, we obtain
		\begin{equation}\label{cp}
			\frac{\det \widetilde{W}(G')}{\det(A\widetilde{W}(G))} = \frac{\det B}{\det(A\widetilde{W}(G))} = \frac{2^{1 - \lfloor \frac{n+1}{2} \rfloor} (-1)^{n-1} m(G;0)}{\lambda_0 2^{1 - \lfloor \frac{n}{2} \rfloor}} = 2^{\lfloor \frac{n}{2} \rfloor - \lfloor \frac{n+1}{2} \rfloor} \frac{(-1)^{n-1} m(G;0)}{\lambda_0}.
		\end{equation}
		Since \[ \det(A\widetilde{W}(G)) = \det(A)\det(\widetilde{W}(G))=\lambda_0(-1)^{n-1}m(G;0)\det \widetilde{W}(G), \] we can rewrite Eq.~\eqref{cp} as 
		\begin{equation*}
			\det \widetilde{W}(G') = 2^{\lfloor \frac{n}{2} \rfloor - \lfloor \frac{n+1}{2} \rfloor} (m(G;0))^2 \det \widetilde{W}(G).
		\end{equation*}
		This establishes Eq.~\eqref{dw_union} under the assumption that $\lambda_0 \neq 0$.
		
		Finally, suppose that $\lambda_0 = 0$. We consider a continuous perturbation of the adjacency matrix $A(\epsilon) = A + \epsilon I$ for $\epsilon > 0$, where $I$ is the identity matrix. Since the main eigenvalues, plain eigenvalues and the determinant of the modified walk matrix depend continuously on the entries of the matrix, the relation in Eq.~\eqref{dw_union} holds for $A(\epsilon)$ for all $\epsilon > 0$. Taking the limit as $\epsilon \to 0^+$ on both sides, we obtain:
		\begin{equation*}
			\det \widetilde{W}(G') = 2^{\lfloor n/2 \rfloor - \lfloor (n+1)/2 \rfloor} (m(G;0))^2 \det \widetilde{W}(G),
		\end{equation*}
		which strictly completes the proof for all almost controllable graphs. This completes the proof of Theorem \ref{wunion}.	
	\end{proof}
	
	Recall that the determinant of the modified walk matrix is an invariant (up to a sign) under graph complement. Since $G\vee K_1=((G^c \cup K_1))^c$, the following counterpart of Theorem \ref{wunion} for disjoint union is immediate.
	
	\begin{theorem} \label{wjoin}
		Let $G\in \mathcal{H}_n$. Then $G\vee K_1$ remains almost controllable if and only if $m(G^c;0) \neq 0$, i.e., $G^c$ does not contain $0$ as a main eigenvalue. Furthermore, we have
		\begin{equation*} 
			\det \widetilde{W}(G\vee K_1) = \pm2^{\lfloor n/2 \rfloor - \lfloor (n+1)/2 \rfloor} (m(G^c;0))^2 \det \widetilde{W}(G).
		\end{equation*}
	\end{theorem}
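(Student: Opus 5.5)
The plan is to reduce Theorem \ref{wjoin} to Theorem \ref{wunion} via complementation, using the identity $G\vee K_1=(G^c\cup K_1)^c$ together with the complement-invariance facts established in the previous section.

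First, recall that $G\in\mathcal{H}_n$ if and only if $G^c\in\mathcal{H}_n$. So $G\in\mathcal{H}_n$ gives $G^c\in\mathcal{H}_n$, and Theorem \ref{wunion} applies to $G^c$. It says that $G^c\cup K_1$ is almost controllable exactly when $m(G^c;0)\neq 0$. Since a graph is almost controllable if and only if its complement is, and $(G^c\cup K_1)^c=G\vee K_1$, this yields the first assertion: $G\vee K_1$ is almost controllable if and only if $m(G^c;0)\neq 0$. The identity $(G^c\cup K_1)^c=G\vee K_1$ itself is immediate. Complementing $G^c$ gives $G$ back, and the isolated vertex becomes adjacent to every vertex of $G$.

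For the determinant formula, assume $m(G^c;0)\neq0$. The proof of complement-invariance of $\mathcal{P}_2$ gives $\widetilde{W}(H^c)=\widetilde{W}(H)U_2$ with $U_2$ unimodular for every $H$ in $\mathcal{H}_k$. Hence $\det\widetilde{W}(H^c)=\pm\det\widetilde{W}(H)$.

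Applying this with $H=G^c\cup K_1\in\mathcal{H}_{n+1}$ gives $\det\widetilde{W}(G\vee K_1)=\pm\det\widetilde{W}(G^c\cup K_1)$. Theorem \ref{wunion} applied to $G^c$ gives
\begin{equation*}
\det\widetilde{W}(G^c\cup K_1)=\pm 2^{\lfloor n/2\rfloor-\lfloor (n+1)/2\rfloor}(m(G^c;0))^2\det\widetilde{W}(G^c).
\end{equation*}
Finally, applying complement-invariance once more with $H=G$ replaces $\det\widetilde{W}(G^c)$ by $\pm\det\widetilde{W}(G)$, and chaining these equalities gives the claimed identity.

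There is essentially no obstacle here. The only point needing care is that the modified walk matrix is defined only for almost controllable graphs. So complement-invariance of $\det\widetilde{W}$ must be invoked only after almost controllability of $G^c\cup K_1$ has been secured, which is exactly what the hypothesis $m(G^c;0)\neq0$ provides. All signs are absorbed into the $\pm$.
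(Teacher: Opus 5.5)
Your proposal is correct and follows the paper's own argument: the paper also deduces the join case from Theorem \ref{wunion} applied to $G^c$, using $G\vee K_1=(G^c\cup K_1)^c$ and the fact that almost controllability and $\det\widetilde{W}$ (up to sign) are invariant under complementation. You spell out more carefully than the paper does that almost controllability of $G^c\cup K_1$ has to be secured before $\widetilde{W}$ is invoked.
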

	
	\section{Proof of Theorem \ref{main}}\label{pf}
	
	Recall that $\mathcal{P}_2$ is the graph property that is described by the three conditions of Theorem \ref{lnc2}. It is known that $\mathcal{P}_2$ is complement-invariant and that graphs with this property are DGS. Now, we consider two graph properties $\mathcal{P}_3$ and $\mathcal{P}_4$, which are stronger than $\mathcal{P}_2$. Precisely, we say that an $n$-vertex graph $G$ possesses property $\mathcal{P}_3$ if $G$ is an almost controllable graph satisfying $\mathcal{P}_2$ and the equation
	\begin{equation}\label{mg}
		(| m(G;0)|,|m((G\cup K_1)^c;0)|)=\begin{cases}
			(1,2),&\text{if }n\text{ is even},\\
			(2,1),&\text{if }n\text{ is odd}.
		\end{cases}
	\end{equation}
	Similarly, we say that a graph $G$ possesses property $\mathcal{P}_4$ if $G$ is an almost controllable graph satisfying $\mathcal{P}_2$ and the equation
	\begin{equation}\label{mgc}
		(| m(G^c;0)|,| m((G^c\cup K_1)^c;0)|)=\begin{cases}
			(1,2),&\text{if }n\text{ is even},\\
			(2,1),&\text{if }n\text{ is odd}.
		\end{cases}
	\end{equation}
	Clearly, the properties $\mathcal{P}_3$ and $\mathcal{P}_4$ are dual properties under graph complementation, that is, $G$ has property $\mathcal{P}_3$ if and only if $G^c$ has property $\mathcal{P}_4$. A key result of this section is that if $G$ possesses $\mathcal{P}_3$ then the disjoint union $G\cup K_1$ must possess $\mathcal{P}_4$. Theorem \ref{main} follows by alternately applying this result and its dual.
	
	\begin{lemma}\label{mgh}
		Let $G\in \mathcal{H}_n$ be such that both $m(G;0)$ and $m((G\cup K_1)^c;0)$ are nonzero. Let $H=(G\cup K_1)\vee K_1$. Then $H\in \mathcal{H}_{n+2}$ and $m(G;0)=\pm m(H;0)$.
	\end{lemma}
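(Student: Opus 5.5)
The first part is a direct application of Theorems \ref{wunion} and \ref{wjoin}. Since $G\in\mathcal{H}_n$ and $m(G;0)\neq 0$, Theorem \ref{wunion} gives $G':=G\cup K_1\in\mathcal{H}_{n+1}$. Since $m(G'^c;0)=m((G\cup K_1)^c;0)\neq 0$, Theorem \ref{wjoin} applied to $G'$ gives $H=G'\vee K_1\in\mathcal{H}_{n+2}$. So the real content is the identity $m(G;0)=\pm m(H;0)$.

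For this I would compute the main polynomial of $H$ directly, using the characterization $m(H;x)=\chi(A_H|_{\col W(H)};x)$. Label the vertices of $H$ as $u$ (the join vertex), $v$ (the isolated vertex of $G'$), and then the vertices of $G$. Then
\[
A_H=\begin{pmatrix}0&1&e^\top\\ 1&0&0^\top\\ e&0&A\end{pmatrix}.
\]
The space $\col W(H)$ is the smallest $A_H$-invariant subspace containing $e_H=(1,1,e)^\top$. Write $U=\col W(G)$, which has dimension $n-1$ and is the cyclic space of $e$ under $A$. The candidate is $\col W(H)=\operatorname{span}\{\epsilon_u,\epsilon_v\}\oplus(0,0,U)$, which has dimension $n+1$, matching $\rank W(H)=n+1$. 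This space is $A_H$-invariant because $A_H(0,0,y)=(e^\top y,0,Ay)$, $A_H\epsilon_u=(0,1,e)$ with $e\in U$, and $A_H\epsilon_v=\epsilon_u$. It contains $e_H$, and the dimension count then forces equality.

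In the basis $\epsilon_u,\epsilon_v$ together with a basis of $U$, the restricted matrix is
\[
M=\begin{pmatrix}0&1&c^\top\\1&0&0\\ f&0&A|_U\end{pmatrix},
\]
where $c^\top$ represents the functional $y\mapsto e^\top y$ on $U$ and $f$ is the coordinate vector of $e$ in $U$. Then $m(H;0)=\det(-M)=\pm\det M$. Expanding along the second row, which has a single $1$ in the first column, leaves the minor obtained by deleting row 2 and column 1. That minor is block upper-triangular with the entry $1$ (from row 1, column 2) above the block $A|_U$, with zeros below the $1$. Hence $\det M=\pm\det(A|_U)=\pm m(G;0)$, up to sign. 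This gives the identity.

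The main obstacle is making sure $\col W(H)$ really is the described $(n+1)$-dimensional space. The dimension argument above, together with almost controllability of $H$ from the first part, should settle it cleanly.
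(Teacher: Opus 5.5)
Your proof is correct, and for the identity $m(G;0)=\pm m(H;0)$ it takes a different route from the paper. The first part, obtained from Theorems \ref{wunion} and \ref{wjoin}, is the same as in the paper.

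The paper works with the whole adjacency matrix. It shows $\det A(H)=-\det A(G)$ by cofactor expansion and checks that $(0,0,\eta)^\top$ is a plain eigenvector of $H$ with the same plain eigenvalue $\lambda_0$ as $G$. It then divides the relations $\det A=\pm\lambda_0\, m(\cdot\,;0)$ by $\lambda_0$. The case $\lambda_0=0$ needs a separate perturbation argument: replace $A(G)$ by $A(G)+\epsilon I$, use lower semicontinuity of rank to keep almost controllability, and pass to the limit.

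You instead identify $\col W(H)$ explicitly as $\operatorname{span}\{\epsilon_u,\epsilon_v\}\oplus(0,0,U)$. Your invariance check and the dimension count using $\rank W(H)=n+1$ are sound. You then perform the same cofactor expansion on the restricted operator $M$ rather than on $A(H)$. This removes the plain eigenvalue from the argument entirely, so there is no case split and no limiting argument. The argument is purely algebraic, so it works over any field. It even fixes the sign: $m(H;0)=(-1)^{n+1}\det M=(-1)^{n}\det(A|_U)$ while $m(G;0)=(-1)^{n-1}\det(A|_U)$, so $m(H;0)=-m(G;0)$.

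What the paper's route gains is continuity with the plain-eigenvector machinery used in Theorem \ref{wunion} and Lemma \ref{lem:l0}. The cost is the more delicate perturbation step, which your argument makes unnecessary.
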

	\begin{proof}
		Since $G\in \mathcal{H}_n$ and $m(G;0)\neq 0$, Theorem \ref{wunion} implies that $G\cup K_1$ is almost controllable. Consequently, using Theorem \ref{wjoin} for the graph $G'=G\cup K_1$ and noting $m((G')^c;0)\neq 0$, we find that $G'\vee K_1$ is almost controllable, i.e., $H\in \mathcal{H}_{n+2}$. Then, by labeling the vertices of $H$ appropriately, we may write the adjacency matrix of $A(H)$ in the form
		\begin{equation*}
			A(H)=\begin{pmatrix}
				0&1&e^\top\\
				1&0&0^\top\\
				e&0&A(G)
			\end{pmatrix}.
		\end{equation*}
		Expanding the determinant of $A(H)$ by the second row and then by the first column, we find that 
		\begin{equation}\label{gh}
			\det A(H)=-\det A(G).
		\end{equation}
		Let $\lambda_0$ be the unique plain eigenvalue of $G$ and $\eta$ be the coprime vector in $\mathcal{N}(W^\top(G))$. Then $e^\top \eta=0$ and $A(G)\eta=\lambda_0 \eta$. Let 
		\begin{equation*}
			\eta'=\begin{pmatrix}
				0\\
				0\\
				\eta
			\end{pmatrix}\in \mathbb{Z}^{n+2}.
		\end{equation*}
		Then we have
		\begin{equation*}
			A(H)	\eta'=\begin{pmatrix}
				0&1&e^\top\\
				1&0&0^\top\\
				e&0&A(G)
			\end{pmatrix}\begin{pmatrix}
				0\\
				0\\
				\eta
			\end{pmatrix}
			=\begin{pmatrix}
				e^\top \eta\\
				0\\
				A(G)\eta
			\end{pmatrix}=
			\begin{pmatrix}
				0\\
				0\\
				\lambda_0\eta
			\end{pmatrix}=\lambda_0\eta'.
		\end{equation*}
		Noting that the sum of entries in $\eta'$ is zero, we see that $\lambda_0$ is also the plain eigenvalue of $H$. If $\lambda_0\neq 0$, then, by Eq.~\eqref{gh},
		\begin{equation*}
			m(G;0)=\pm \frac{\det A(G)}{\lambda_0}=\pm \frac{\det A(H)}{\lambda_0}=\pm m(H;0).
		\end{equation*}
		To show $m(G;0) = \pm m(H;0)$ for the case $\lambda_0 = 0$, we introduce the perturbed matrix 
		\begin{equation*}
			B(\epsilon) = \begin{pmatrix}
				0 & 1 & e^\top \\
				1 & 0 & 0^\top \\
				e & 0 & A(G) + \epsilon I_n
			\end{pmatrix},
		\end{equation*}
		where only the $A(G)$ block is perturbed. Let $A(\epsilon) = A(G) + \epsilon I_n$. We can easily verify that $\eta'$ is still a plain eigenvector for $B(\epsilon)$ with eigenvalue $\epsilon$, since
		\begin{equation*}
			B(\epsilon)\eta' = 
			\begin{pmatrix}
				0 & 1 & e^\top \\
				1 & 0 & 0^\top \\
				e & 0 & A(\epsilon)
			\end{pmatrix}
			\begin{pmatrix}
				0 \\ 0 \\ \eta
			\end{pmatrix}
			= \begin{pmatrix}
				e^\top \eta \\ 0 \\ A(\epsilon)\eta
			\end{pmatrix}
			= \begin{pmatrix}
				0 \\ 0 \\ \epsilon \eta
			\end{pmatrix}
			= \epsilon \eta'.
		\end{equation*}
		Since $e^\top \eta' = 0$, the vector $\eta'$ is orthogonal to the column space of the walk matrix $W(B(\epsilon))$, which implies $\rank W(B(\epsilon)) \le (n+2) - 1 = n+1$. On the other hand, the rank of a matrix is a lower semi-continuous function of its entries. Because $\rank W(B(0)) = \rank W(H) = n+1$, we must have $\rank W(B(\epsilon)) \ge n+1$ for sufficiently small $\epsilon$. Therefore, $\rank W(B(\epsilon)) = n+1$ in a neighborhood of $\epsilon = 0$, meaning that $B(\epsilon)$ has exactly $(n+1)$ main eigenvalues and hence $\epsilon$ is the unique plain eigenvalue. Expanding the determinant of $B(\epsilon)$ by the second row, we obtain 
		\begin{equation*}
			\det B(\epsilon) = - \det A(\epsilon).
		\end{equation*}
		It follows that
		\begin{equation*}
			m(B(\epsilon); 0) = \pm \frac{\det B(\epsilon)}{\epsilon} = \pm \frac{\det A(\epsilon)}{\epsilon} = \pm m(A(\epsilon); 0),
		\end{equation*}
		where the sign $\pm$ only depends on $n$ and is independent of $\epsilon$. Taking the limit as $\epsilon \to 0$ on both sides, we obtain
		\begin{equation*}
			m(H; 0) =\pm \lim_{\epsilon \to 0} m(B(\epsilon); 0) = \pm \lim_{\epsilon \to 0} (m(A(\epsilon); 0)) = \pm m(G; 0).
		\end{equation*}
		This establishes $m(G; 0) = \pm m(H; 0)$ for all cases, completing the proof.
	\end{proof}

	\begin{lemma} \label{lem:l0}
		Let $G\in \mathcal{H}_n$ have property $\mathcal{P}_3$. Then $G\cup K_1$ has property $\mathcal{P}_4$.
	\end{lemma}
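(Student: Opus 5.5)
Write $G'=G\cup K_1$, a graph on $n+1$ vertices. We must show that $G'$ is almost controllable, that $G'$ has $\mathcal{P}_2$, and that $G'$ satisfies Eq.~\eqref{mgc} with $n+1$ in place of $n$. Since $(G')^c\cup K_1$ has complement $G'\vee K_1=H$, this last condition reads
\begin{equation*}
(|m((G')^c;0)|,|m(H;0)|)=(1,2)\text{ if $n+1$ is even},\qquad (2,1)\text{ if $n+1$ is odd},
\end{equation*}
where $H=(G\cup K_1)\vee K_1$.

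\textbf{Step 1: controllability and Eq.~\eqref{mgc}.} Because $G$ has $\mathcal{P}_3$, Eq.~\eqref{mg} gives $m(G;0)\neq0$ and $m((G\cup K_1)^c;0)=m((G')^c;0)\neq 0$. Theorem \ref{wunion} then shows $G'\in\mathcal{H}_{n+1}$. Lemma \ref{mgh} gives $H\in\mathcal{H}_{n+2}$ and $|m(H;0)|=|m(G;0)|$. Hence the pair we need is $(|m((G')^c;0)|,|m(G;0)|)$, which is exactly the reversed pair of Eq.~\eqref{mg}. Reversing $(1,2)$ for $n$ even gives $(2,1)$, and $n+1$ is then odd, as required; the case $n$ odd is symmetric. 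So Eq.~\eqref{mgc} holds for $G'$.

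\textbf{Step 2: conditions (i) and (ii) of $\mathcal{P}_2$.} By Theorem \ref{wunion},
\begin{equation*}
\det\widetilde W(G')=\pm 2^{\lfloor n/2\rfloor-\lfloor (n+1)/2\rfloor}m(G;0)^2\det\widetilde W(G).
\end{equation*}
Since $|m(G;0)|\in\{1,2\}$, the extra factor is either $2^{\lfloor n/2\rfloor-\lfloor(n+1)/2\rfloor}=2^0$ (for $n$ even, with $m=\pm1$) or $2^{-1}\cdot 4=2$ (for $n$ odd). In both cases it equals $2^{\lfloor (n+1)/2\rfloor-\lfloor n/2\rfloor}$. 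Thus $\det\widetilde W(G')=\pm2^{\lfloor (n+1)/2\rfloor}\ell_0 p_1^2\cdots p_m^2$ with the same odd primes, provided $\ell_0(G')=\ell_0(G)$.

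To get this, compute $\xi(G')$ as in the proof of Theorem \ref{wunion}: $\xi(G')=(0,\pm m(G;0)\xi(G))^\top$. Dividing by the gcd of the entries, the coprime generator is $\eta'=(0,\eta)^\top$, so $\ell_0(G')=\eta^\top\eta/2=\ell_0(G)$. This gives conditions (i) and (ii) for $G'$.

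\textbf{Step 3: condition (iii) of $\mathcal{P}_2$.} This is the main obstacle. For each $p_j$ I need $\Phi_p(G';x)$ to have at least two distinct roots over $\bar{\mathbb F}_p$. The proof of complement-invariance shows that a root $\lambda$ of $\Phi_p(G;x)$ comes with a vector $\gamma$ satisfying $e^\top\gamma=0$ and $A\gamma=\lambda\gamma$ over $\bar{\mathbb F}_p$. Set $\gamma'=(0,\gamma)^\top$. Then $A'\gamma'=\lambda\gamma'$ and $J\gamma'=0$, so $\lambda$ is a common eigenvalue of $A'$ and $A'+J$, i.e.\ a root of $\Phi_p(G';x)$. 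Hence every root of $\Phi_p(G;x)$ is a root of $\Phi_p(G';x)$, and the two distinct roots carry over. The only care needed is the existence of such a $\gamma$ in characteristic $p$, and the complement-invariance argument works verbatim over $\bar{\mathbb F}_p$. Since $p_j\ne 2$ is unchanged, this completes $\mathcal{P}_2$ for $G'$, and hence $\mathcal{P}_4$.
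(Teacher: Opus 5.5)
Your proposal is correct and follows the paper's proof essentially step for step. Theorem~\ref{wunion} gives almost controllability and the power-of-two bookkeeping, Lemma~\ref{mgh} turns Eq.~\eqref{mgc} for $G\cup K_1$ into the swapped pair of Eq.~\eqref{mg}, and padding the $e$-orthogonal common eigenvector as $(0,\gamma)^\top$ transfers the roots of $\Phi_p$.

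The only variation is in how you get $\ell_0(G\cup K_1)=\ell_0(G)$. You read it off the cofactor formula $\xi(G\cup K_1)=(0,\pm m(G;0)\xi(G))^\top$ from inside the proof of Theorem~\ref{wunion}. The paper instead checks directly that the primitive vector $(0,\eta)^\top$ lies in $\mathcal{N}(W(G\cup K_1)^\top)$. That is slightly more self-contained, since the cofactor formula is derived there only under $\lambda_0\neq 0$, although it does hold in general.
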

	\begin{proof}
		Let $G' = G \cup K_1$ and we label the newly added vertex as the first vertex of $G'$. By Eq.~\eqref{mg}, we know that $m(G;0)\neq 0$ and hence $G'$ is almost controllable by Theorem \ref{wunion}. Let $\eta$ be the coprime vector in $\mathcal{N}(W(G)^\top)$. Let $\lambda_0$ be the unique plain eigenvalue of $G$. Noting that $A(G)\eta=\lambda_0\eta$ and using Eq.~\eqref{wgp}, it is not difficult to see that 
		\begin{equation*}
			W(G')^\top\begin{pmatrix} 0\\\eta\end{pmatrix}=\begin{pmatrix}
				1&e^\top\\
				0&W(G)^\top A 
			\end{pmatrix}\begin{pmatrix} 0\\\eta\end{pmatrix}=\begin{pmatrix}
				e^\top \eta\\\lambda_0 W(G)^\top \eta
			\end{pmatrix}=0.
		\end{equation*}
		Let $\eta'=\begin{pmatrix} 0\\\eta\end{pmatrix}\in \mathbb{Z}^{n+1}$. Since the elements of $\eta$ are coprime, the elements of $\eta'$ are also coprime, meaning $\eta'$ is the coprime integral generator of $\mathcal{N}(W(G')^T)$. Consequently, we have 
		\[ \ell_0(G') = \frac{(\eta')^\top \eta'}{2} = \frac{\eta^\top \eta}{2} = \ell_0(G). \]
		
		We proceed to check that the number $\det \widetilde{W}(G')$ has the desired factorization as described in Theorem \ref{lnc2}. Let 
		\[ \det \widetilde{W}(G)=\pm 2^{\lfloor\frac{n}{2}\rfloor}\ell_0p_1^2p_2^2\ldots p_m^2 \]
		be the factorization of $\det \widetilde{W}(G)$, where $m\ge 0$ and $p_i$'s are $m$ distinct odd primes. Then, by Eq.~\eqref{dw_union} of Theorem \ref{wunion}, we have
		\begin{equation*} 
			\det \widetilde{W}(G') = \pm2^{2\lfloor \frac{n}{2} \rfloor - \lfloor \frac{n+1}{2} \rfloor} (m(G;0))^2 \ell_0p_1^2p_2^2\ldots p_m^2.
		\end{equation*}
		By Eq.~\eqref{mg}, we have 
		\begin{equation*}
			m(G;0)=\begin{cases}
				\pm1,&\text{if }n\text{ is even,}\\
				\pm2,&\text{if }n\text{ is odd.}
			\end{cases}
		\end{equation*}
		Using a simple discussion on the parity of $n$, we find that 
		\begin{equation*} 
			2^{2\lfloor \frac{n}{2} \rfloor - \lfloor \frac{n+1}{2} \rfloor} (m(G;0))^2=2^{\lfloor\frac{n+1}{2}\rfloor},
		\end{equation*}
		which means that $\det \widetilde{W}(G')$ has the desired factorization as described in Theorem \ref{lnc2} (i). Next, we check Condition (iii) of Theorem \ref{lnc2} for $G'$. Let $p$ be any of the odd prime factors $p_1, \dots, p_m$. Since $\Phi_p(G; x)$ has at least two distinct roots over $\bar{\mathbb{F}}_p$, it suffices to show that any root of $\bar{\mathbb{F}}_p$ is also a root of $\Phi_p(G'; x)$. Let $\lambda \in \bar{\mathbb{F}}_p$ be any root of $\Phi_p(G; x)$. As established in Section 2, there exists a common eigenvector $\gamma \in \bar{\mathbb{F}}_p^n$ of $A(G)$ and $A(G) + J_n$ corresponding to $\lambda$ such that $e^\top \gamma = 0$. Let $\gamma' = \begin{pmatrix} 0 \\ \gamma \end{pmatrix} \in \bar{\mathbb{F}}_p^{n+1}$. Since $e^\top \gamma = 0$, we have $(e')^\top \gamma' = 0$, where $e'$ is the all-ones vector of dimension $n+1$. Direct computation yields
		\begin{equation*}
			A(G') \gamma' = \begin{pmatrix} 0 & 0^\top \\ 0 & A(G) \end{pmatrix} \begin{pmatrix} 0 \\ \gamma \end{pmatrix} = \begin{pmatrix} 0 \\ A(G)\gamma \end{pmatrix} = \lambda \gamma',
		\end{equation*}
		and
		\begin{equation*}
			(A(G') + J_{n+1}) \gamma' = \begin{pmatrix} 1 & e^\top \\ e & A(G) + J_n \end{pmatrix} \begin{pmatrix} 0 \\ \gamma \end{pmatrix} = \begin{pmatrix} e^\top \gamma \\ (A(G)+J_n)\gamma \end{pmatrix} = \begin{pmatrix} 0 \\ \lambda \gamma \end{pmatrix} = \lambda \gamma'.
		\end{equation*}
		This indicates that $\lambda$ is also a common eigenvalue of $A(G')$ and $A(G') + J_{n+1}$ over $\bar{\mathbb{F}}_p$. Thus, every root of $\Phi_p(G; x)$ is also a root of $\Phi_p(G'; x)$ over $\bar{\mathbb{F}}_p$. Hence, $G'$ satisfies Condition (iii) of Theorem \ref{lnc2}.
		
		It remains to check Eq.~\eqref{mgc} for the graph $G'$ on $n+1$ vertices, which requires showing
		\begin{equation}\label{mgc2}
			(| m((G')^c;0)|,| m(((G')^c\cup K_1)^c;0)|)=\begin{cases}
				(1,2),&\text{if }n+1\text{ is even},\\
				(2,1),&\text{if }n+1\text{ is odd}.
			\end{cases}
		\end{equation}
		Notice that $G' = G \cup K_1$, yielding $(G')^c = (G \cup K_1)^c$. Furthermore, $((G')^c \cup K_1)^c = G' \vee K_1 = (G \cup K_1) \vee K_1$. By Lemma \ref{mgh}, we have $m(((G')^c \cup K_1)^c; 0) = \pm m(G; 0)$. Consequently, the left side of Eq.~\eqref{mgc2} is exactly the swapped pair of $G$:
		\begin{equation*}
			(| m((G')^c;0)|,| m(((G')^c\cup K_1)^c;0)|) = (| m((G \cup K_1)^c;0)|, |m(G;0)|).
		\end{equation*}
		Since $G$ possesses property $\mathcal{P}_3$, Eq.~\eqref{mg} holds for $n$. Recognizing that the parity of $n+1$ is strictly opposite to that of $n$, this swapped pair trivially satisfies Eq.~\eqref{mgc2}. This completes the proof that $G'$ possesses property $\mathcal{P}_4$.
	\end{proof}
	Dually, we obtain the corresponding result for the join operation.
	
	\begin{lemma} \label{lem:l0_dual}
		Let $G \in \mathcal{H}_n$ have property $\mathcal{P}_4$. Then $G \vee K_1$ has property $\mathcal{P}_3$.
	\end{lemma}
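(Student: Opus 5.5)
The plan is to deduce the statement from Lemma~\ref{lem:l0} by complementation, with no new computation. Let $G\in\mathcal{H}_n$ have property $\mathcal{P}_4$. The first step is to pass to the complement. We already know that $G\in\mathcal{H}_n$ if and only if $G^c\in\mathcal{H}_n$, and that $\mathcal{P}_3$ and $\mathcal{P}_4$ are dual under complementation: $G$ has $\mathcal{P}_4$ exactly when $G^c$ has $\mathcal{P}_3$. To confirm this duality, note that $\mathcal{P}_2$ is complement-invariant. Moreover, Eq.~\eqref{mgc} for $G$ is literally Eq.~\eqref{mg} written for the graph $G^c$, since $(G^c)^c=G$ means the quantities $m(G^c;0)$ and $m((G^c\cup K_1)^c;0)$ are the ones appearing in Eq.~\eqref{mg} for $G^c$. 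Hence $G^c$ is an almost controllable graph with property $\mathcal{P}_3$.

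The second step applies Lemma~\ref{lem:l0} to $G^c$. This shows that $G^c\cup K_1$ has property $\mathcal{P}_4$, and in particular $G^c\cup K_1\in\mathcal{H}_{n+1}$.

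The third step transfers the conclusion back. By the identity $G\vee K_1=(G^c\cup K_1)^c$ already used before Theorem~\ref{wjoin}, the graph $G\vee K_1$ is the complement of a graph with $\mathcal{P}_4$. By the duality in the first step, applied now to the $(n+1)$-vertex graph $G^c\cup K_1$, the complement has $\mathcal{P}_3$. Almost controllability is preserved because $\mathcal{H}_{n+1}$ is closed under complements.

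The only point requiring care is the duality check. One must verify that the defining equations \eqref{mg} and \eqref{mgc} really swap under $G\mapsto G^c$, including that the parity case split is unchanged because complementation preserves the number of vertices. This is a direct substitution and involves no genuine obstacle.
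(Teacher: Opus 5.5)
Your proposal is correct and matches the paper's proof: pass to $G^c$, which has $\mathcal{P}_3$ by the duality, apply Lemma~\ref{lem:l0} to get $\mathcal{P}_4$ for $G^c\cup K_1$, and complement back using $(G^c\cup K_1)^c=G\vee K_1$. Your explicit check that Eqs.~\eqref{mg} and \eqref{mgc} swap under complementation spells out what the paper simply asserts as clear.
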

	\begin{proof}
		Suppose $G \in \mathcal{H}_n$ has property $\mathcal{P}_4$. By definition, its complement $G^c$ has property $\mathcal{P}_3$. Applying Lemma \ref{lem:l0} to $G^c \in \mathcal{H}_n$, we obtain that $G^c \cup K_1$ has property $\mathcal{P}_4$. Since $\mathcal{P}_3$ and $\mathcal{P}_4$ are dual properties under graph complementation, and $(G^c \cup K_1)^c = G \vee K_1$, it follows that $G \vee K_1$ has property $\mathcal{P}_3$.
	\end{proof}
	
	Using Lemma \ref{lem:l0} and Lemma \ref{lem:l0_dual}, we can now prove the main theorem.
	\begin{proof}[Proof of Theorem \ref{main}]
		By assumption, the initial graph $G_0 \in \mathcal{H}_{n_0}$ satisfies the conditions of Theorem \ref{lnc} and Eq.~\eqref{mg}, which is precisely the statement that $G_0$ possesses property $\mathcal{P}_3$. 
		
		By the recursive definition of $G_i$, we can apply Lemma \ref{lem:l0} and Lemma \ref{lem:l0_dual} alternately. A straightforward induction shows that $G_i$ possesses property $\mathcal{P}_3$ when $i$ is even, and property $\mathcal{P}_4$ when $i$ is odd. 
		
		Since both $\mathcal{P}_3$ and $\mathcal{P}_4$ imply property $\mathcal{P}_2$, every graph $G_i$ in the sequence is almost controllable and satisfies the conditions of Theorem \ref{lnc2}. Consequently, each $G_i$ is uniquely determined by its generalized spectrum. This completes the proof of Theorem \ref{main}.
	\end{proof}
	\section*{Acknowledgments}
	This work is partially supported by the National Natural Science Foundation of China (Grant No. 12001006) and Wuhu Science and Technology Project, China (Grant No.~2024kj015). The authors acknowledge the use of Gemini 3.1 Pro for language polishing and detailed proofreading of the manuscript.

\end{document}